
\documentclass[12pt]{amsart}

\headheight=7pt         \topmargin=14pt
\textheight=574pt       \textwidth=445pt
\oddsidemargin=18pt     \evensidemargin=18pt 

\usepackage {amsmath, amssymb, amscd, graphicx, color, url,epsf}
\usepackage{amsmath, amssymb, graphicx, epsfig, verbatim, graphs, psfrag,enumitem}

\newcounter{bean}
\Alph{bean}
\newtheorem{thm}{Theorem}[section]

\newtheorem{cor}[thm]{Corollary}
\newtheorem{lem}[thm]{Lemma}
\newtheorem{prop}[thm]{Proposition}
\newtheorem{defn}[thm]{Definition}

\newtheorem{rem}[thm]{Remark}

\numberwithin{equation}{section}

\mathsurround=1pt
\setlength{\parindent}{0em}
\setlength{\parskip}{1.2ex}

%
%

%
%

\newcommand{\x}{\mathbf x}

\newcommand{\y}{\mathbf y}
\newcommand\Ta{\mathbb{T}_{\alpha}}
\newcommand\Tb{\mathbb{T}_{\beta}}

\newcommand\CFm{\mathrm{CF}^-}
\newcommand\CFmp{{{\widetilde{\mathrm{CF}}}}{}^-}

\newcommand\HFa{\widehat{\mathrm{HF}}}
\newcommand\HFm{\mathrm{HF}^-}
\newcommand\HFn{\mathrm{HF}^-_{[n]}}

\newcommand\HFtwo{\mathrm{HF}^-_{[2]}}
\newcommand\CFtwo{\mathrm{CF}^-_{[2]}}

\newcommand{\smargin}[1]{\marginpar{\tiny{#1}}}
\newcommand{\Field}{\mathbb{F}}
\newcommand{\SpinC}{\mathrm{Spin}^c}
\newcommand{\adapted}{adapted }

%
%

\newcommand{\Z}{\mathbb Z}

\newcommand{\R}{\mathbb R}

\newcommand{\bfr}{\mathbb R}

\newcommand\Mas{\mu}

\DeclareMathOperator{\Sym}{Sym}
\newcommand{\alphas}{{\mathbf {\alpha}}}
\newcommand{\betas}{{\mathbf {\beta}}}
\newcommand\as{\mathbf a}
\newcommand\bs{\mathbf b}
\newcommand\ws{\mathbf w}
\newcommand\Os{\mathbf O}
\newcommand\Xs{\mathbf X}

\DeclareMathOperator{\ModFlow}{\mathcal M}
\DeclareMathOperator{\UnparModFlow}{\widehat\ModFlow}
%
%

\begin{document}

\title[$U^2=0$]{A combinatorial description of the $U^2=0$ version of Heegaard Floer homology}

\author{Peter S. Ozsv\'ath}
\address{Department of Mathematics\\
Columbia University, New York, NY, 10027}
\email{petero@math.columbia.edu}

\author{Andr\'{a}s I. Stipsicz}
\address{Department of Mathematics\\
Columbia University, New York, NY, 10027  and \\
R{\'e}nyi Institute of Mathematics\\
Budapest, Hungary}
\email{stipsicz@math-inst.hu}

\author{Zolt\'an Szab\'o}
\address{Department of Mathematics\\
Princeton University\\
Princeton, NJ, 08450}
\email{szabo@math.princeton.edu}

\begin{abstract}
We show that every 3--manifold admits a Heegaard diagram in which 
a truncated version of Heegaard Floer homology (when the holomorpic
disks pass through the basepoints at most once) can be computed
combinatorially.
\end{abstract}
\maketitle

\section{Introduction}
\label{sec:first}

Heegaard Floer homology~\cite{OSz1,OSz2} is a topological invariant
for closed, oriented three-manifolds.  These invariants are defined by
considering versions of Lagrangian Floer homologies of certain tori
(derived from Heegaard diagrams of $Y$) in a symmetric power of the
Heegaard surface.  There are several variants of these homology
groups: there is a simple version $\HFa (Y)$, where only those
holomorphic disks are counted which are supported in the complement of
a certain divisor in the symmetric power, and a richer version $\HFm
(Y)$, where the holomorphic disks passing through the above mentioned
divisor are counted with weight, recorded in the exponent of a formal
variable $U$. Specifically, this latter group admits the algebraic
structure of an $\Field [U]$--module, where $\Field=\Z/2\Z$, and the
corresponding $\HFa$--theory can be derived from it by setting $U=0$
at the chain complex level.

The computation of these invariants is typically challenging, since
one needs to analyze moduli spaces of pseudo--holomorphic disks in
high symmetric powers of the Heegaard surface. It was noticed by
Sarkar and Wang \cite{SW} that for convenient Heegaard diagrams the
complex analytic considerations are unnecessary for the purpose of
calculating $\HFa (Y)$. Specifically, the combinatorial structure of
such a convenient diagram determines the chain complex computing the
Heegaard Floer homology group $\HFa (Y)$. In addition, Sarkar and Wang
showed that any 3--manifold admits such a convenient Heegaard diagram
(wich they called \emph{nice}).  In a related vein,~\cite{MOS} have
used grid diagrams to calculate all the variants of Heegaard Floer
homology of knots and links in $S^3$ in a purely combinatorial manner.

In this work we extend the result of Sarkar and Wang, by showing that
for a suitably chosen Heegaard diagram the differential of the chain
complex which counts holomorphic disks passing through the basepoint
at most once (that is, the chain complex obtained by setting $U^2=0$
in the chain complex computing $\HFm (Y)$) can also be determined
combinatorially. Furthermore, we show that every closed, oriented
3--manifold $Y$ admits such a Heegaard diagram.  

Indeed, letting $\CFm(Y)$ denote the free $\Field[U]$--module whose
homology is the invariant $\HFm(Y)$, and defining $\HFn(Y) $ as the
homology of the chain complex $(\CFm (Y)/(U^n=0), \partial ^-)$, then
we prove the following:

\begin{thm}
\label{t:main}
A closed, oriented 3--manifold $Y$ admits a Heegaard diagram from
which $\HFtwo (Y)$, the specialization to $\Field[U]/U^2$, can be
computed in a purely combinatorial manner.
\end{thm}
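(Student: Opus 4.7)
The plan is to extend the Sarkar--Wang nice diagram framework from the $\HFa$ setting to the $U^2$ setting. Since the differential on $\CFtwo(Y)$ counts pseudoholomorphic disks with $n_z \le 1$, and the contributions with $n_z = 0$ are already handled combinatorially by Sarkar--Wang's analysis of nice diagrams, the new content is a combinatorial enumeration of Maslov index one disks with $n_z = 1$ precisely. The strategy has three layers: define a strengthened niceness condition tailored to control the $n_z=1$ domains, verify that this condition forces such domains (and their moduli counts mod $2$) to be combinatorial, and finally show that every closed oriented 3--manifold admits a Heegaard diagram satisfying the strengthened condition.

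First, I would introduce a class of Heegaard diagrams that is nice in the sense of Sarkar--Wang away from the basepoint $z$, and in addition imposes combinatorial constraints on the region $R_z$ containing $z$ and on the regions adjacent to $R_z$. A natural candidate is to require that $R_z$ be a bigon and that every region sharing an edge or a corner with $R_z$ be itself a bigon or a rectangle; stronger or weaker variants may be needed depending on what the subsequent domain analysis requires. The guiding principle is that every embedded domain of Maslov index one with $n_z = 1$ should decompose as the basepoint region glued to a small number of bigon/rectangle regions in a controlled combinatorial way.

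Next, I would classify embedded Maslov index one domains $D$ with $n_z(D)=1$ in such a diagram. Away from $R_z$, the multiplicities of $D$ live in a nice diagram, so Sarkar--Wang type arguments restrict $D$ to a union of bigons and rectangles; together with the local constraints on $R_z$, this should force $D$ to fall into a short explicit list of combinatorial shapes obtained by fusing $R_z$ with one or two adjacent bigons/rectangles (possibly together with disconnected empty bigons/rectangles). For each such shape I would apply a Riemann mapping / branch cut argument, in the spirit of Lipshitz and of Sarkar--Wang, to conclude that the mod $2$ count of holomorphic representatives is $1$, so that the differential is read off from the combinatorics of the diagram.

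Finally, I would show that every closed oriented 3--manifold admits a diagram of the strengthened class. Starting from a Sarkar--Wang nice diagram of $Y$, I would apply local isotopies and finger moves supported in a neighborhood of $z$ in order to shrink $R_z$ to a bigon and simplify its neighboring regions, taking care that these local moves do not destroy niceness elsewhere. This is likely handled by an inductive complexity reduction treating the regions close to $z$ separately from the global Sarkar--Wang algorithm. The step I expect to be the main obstacle is the classification of embedded Maslov index one domains with $n_z = 1$: bigon/rectangle control away from $z$ is the easy part, but ruling out exotic shapes near $z$ and verifying unique holomorphic representatives for each combinatorial type admitted by the strengthened condition will require the most delicate analysis, and it is this step that dictates exactly which strengthened niceness condition is the correct one to impose.
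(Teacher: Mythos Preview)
Your proposal contains a concrete topological obstruction. You suggest making the basepoint region $R_z$ a bigon while keeping all other regions bigons or rectangles. But the Euler measures of the elementary domains sum to $\chi(\Sigma)=2-2g$; since bigons contribute $\tfrac{1}{2}$ and rectangles contribute $0$, a diagram in which \emph{every} region (including $R_z$) is a bigon or rectangle forces $g\le 1$. Thus for any $3$--manifold of Heegaard genus at least two your strengthened niceness condition is vacuous, and no amount of local finger moves near $z$ can repair this: in a singly-pointed nice diagram, $R_z$ must absorb all of the negative Euler measure and is therefore a $2m$--gon with $m$ growing linearly in $g$. The domains that cross such a large polygon once are not obviously constrained to a short combinatorial list, and your Riemann--mapping argument would have to handle arbitrarily complicated shapes.

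The paper circumvents this by taking a completely different route. Instead of a single basepoint and local simplification, it uses a \emph{multi-pointed} Heegaard diagram obtained as a simple $3$--fold branched cover of a grid diagram for the branch locus (via Hilden--Montesinos). In this ``adapted'' diagram every basepoint region is an octagon and there are $2n$ of them, so the negative Euler measure is distributed evenly rather than concentrated. The crucial extra structure is the projection $\pi\colon\Sigma\to T^2$ to the grid torus: this projection is the main tool used to classify non-negative Maslov index one domains with $n_w(\phi)\le 1$ (they turn out to be embedded rectangles, embedded annuli of a specific shape, or embedded octagons) and to prove that no nontrivial Maslov index zero domains exist, which in turn shows $J$--independence of the counts. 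None of this global structure is visible from your local-modification viewpoint, and it is precisely what makes the domain classification tractable.
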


A more precise version of our result is stated in
Theorem~\ref{t:contribution}, after we introduce the particular
Heegaard diagram which is of use to us. For this diagram, the theorem
then says that all non-negative domains of Whitney disks which have
Maslov index one and which cross the distinguished divisor no more
than once do, in fact, contribute with multiplicity one in the
differential for $\CFtwo(Y)$. Finding such domains is clearly a
combinatorial matter. Our diagram is nice in the sense of Sarkar and
Wang, hence for Whitney disks which are {\em disjoint} from the
divisor, the result stated above actually follows from their work. The
result is new for the case of disks which cross the distinguished
divisor with multiplicity exactly one.

The Heegaard diagram considered in this paper is obtained from a
triple branched cover of a link, represented by a grid diagram.  As
such, our constructions here were partly motivated by work of Levine
\cite{Lev}, who studied branched covers of grid diagrams in a slightly
different context.  The construction of the Heegaard diagrams equips
them with some extra structures, most notably with a projection to a
grid diagram.  This projection can be conveniently used in
understanding properties of Maslov index zero and one homology
classes.

The optimistic reader might hope that an analogous result --- that
non-negative, Maslov index one homology classes of Whitney disks for
our Heegaard diagram always contribute with multiplicity one in the
differential --- could hold for disks which cross the distinguished
divisor an arbitrary number of times.  This fails, however, for our
diagram already for the case where disks cross the basepoints with
multiplicity two, i.e. for the $U^3=0$ version of the
theory. Specifically, some Maslov index one, non-negative homology
classes contribute to the differential, while others do not, and this
choice is determined analytically. However, there is a way of
formalizing this analytic choice combinatorially, and hence to come up
with a concrete chain complex calculating $\HFn(Y)$ for $n=3$. We will
return to this problem in~\cite{u3}.

The paper is organized as follows.  In Section~\ref{sec:Preliminaries}
we recall the basic notions regarding Heegaard Floer theory, while in
Section~\ref{sec:HeegaardDiagrams} we describe the special type of
Heegaard diagrams we would like to work with.  In
Section~\ref{sec:fifth} we classify the Maslov index one domains
crossing the distinguished divisor at most once in the Heegaard
diagram (these are the ones we encounter in determining the
differential), and in Section~\ref{sec:CalcContributions} we determine
the contribution of each Maslov index one domain in the differential,
concluding the evaluation of $\HFtwo (Y)$.

{\bf Acknowledgements}: PSO was supported by NSF grant number
DMS-0505811 and FRG-0244663.  AS was supported by the Clay Mathematics
Institute, by OTKA T49449 and by Marie Curie TOK project
BudAlgGeo. ZSz was supported by NSF grant number DMS-0704053 and
FRG-0244663.

\section{Preliminaries}
\label{sec:Preliminaries}

\subsection*{Heegaard Floer homologies}
Suppose that the closed, oriented 3--manifold $Y$ is given by a
Heegaard diagram $(\Sigma , \alphas , \betas )$, where $\Sigma$ is a
surface with genus $g$, $\alphas = \{ \alpha _1 , \ldots , \alpha _g
\}$ and $\betas = \{ \beta _1, \ldots , \beta _g \} $ are two
collections of disjoint, linearly independent, simple, closed curves
on $\Sigma$ with the property that the $\alpha _i $'s intersect the
$\beta _j$'s transversely. The 3--manifold $Y$ can be reconstructed
from the triple $(\Sigma , \alphas , \betas )$ by attaching
3--dimensional 2--handles along $\alpha _i \times \{ -1 \}$ and $\beta
_j \times \{ 1\}$ to $\Sigma \times [-1,1]$ and then attaching two
3--balls to the resulting 3--manifold with two $S^2$--boundaries. The
$\alpha $-- (and similarly the $\beta$--) curves give rise to a torus
$\Ta$ (resp.  $\Tb$) in the $g^{th}$ symmetric power $\Sym  ^g
(\Sigma )$ of $\Sigma$. Define $\CFm (Y)$ as the free module over
$\Field[U]$ (with $\Field=\Z/2\Z$) generated by the (finitely many)
intersection points $\Ta \cap \Tb$. Fix a point $w$ on $\Sigma$ which
is in the complement of the $\alpha$-- and $\beta$--curves.  The
differential on $\CFm (Y)$ is defined on an element $\x \in \Ta \cap
\Tb$ of the generating system by
\begin{equation}
\label{eq:DefineDifferential}
\partial ^- \x =\sum _{\y \in \Ta\cap \Tb}\sum _{\phi \in \pi _2 (\x , \y ),
\mu (\phi )=1} \# \UnparModFlow(\phi ) \cdot U^{n_w(\phi )}\y,
\end{equation}
where $\pi_2(\x,\y)$ denotes the space of homology classes of Whitney
disks connecting $\x$ to $\y \in \Ta\cap\Tb$, $\mu (\phi )$ denotes
the Maslov index of the homology class $\phi$, $n_w (\phi )$ is the
algebraic intersection number of $\phi$ with the divisor $V_w=\{ w\}
\times \Sym ^{g-1}(\Sigma )\subset \Sym ^g (\Sigma )$, and
${\UnparModFlow}(\phi )$ is the quotient of the moduli space of
holomorphic disks representing $\phi$ by the $\bfr$--action induced by
translation, with respect to some suitably-chosen path of
almost-complex structure $J$ on the symmetric product.  We typically
suppress the choice of $J$ from the notation; though the reader is
cautioned that the actual differential typically does depend on this
choice.  Sometimes, when we wish to emphasize this choice, we include
it in the notation for the moduli space, writing
$\UnparModFlow_J(\phi)$ to denote the space of $J$-pseudo-holomorphic
representatives of $\phi$, modulo the $\bfr$--action. The term $\#
{\UnparModFlow}(\phi)$ denotes the number of elements modulo 2 in the
quotient space.  The theory admits a sign refined version, which is
defined over $\Z [U]$, but we will not address sign issues in this
paper.

We can decompose $\Sigma-\alphas-\betas$ as a disjoint union of
regions $\coprod_i D_i$ which we call {\em elementary domains}.

\begin{defn}
  \label{def:Domain}
{\rm A homology class of Whitney disks $\phi\in\pi_2(\x,\y)$ naturally
  determines a formal linear combination $D(\phi )$ of elementary
  domains by fixing a point $d_i$ in each $D_i$ and associating $\sum
  _i n _{d_i}(\phi ) D_i$ to $\phi $. The linear combination $D(\phi
  )$ is called the {\em domain associated to $\phi$}. We say that
  $\phi$ is \emph{non-negative} if all the coefficients $n_{d_i}(\phi
  )$ in the above sum are non-negative, and $\phi$ is \emph{embedded}
  if each of the coefficients $n_{d_i}(\phi)$ in the above sum are
  either zero or one.}
\end{defn}

The sums in \eqref{eq:DefineDifferential} defining the differential
are not {\em a priori} finite for an arbitrary diagram. Finiteness is
ensured, however, if we work with {\em admissible} diagrams (called
{\em weakly admissible} in~\cite{OSz1}) which we briefly recall here.
A {\em periodic domain} for a Heegaard diagram is a two-chain with
boundary among the $\alpha_i$ and $\beta_j$, and with local
multiplicity zero at $w$. The space of periodic domains is an Abelian
group, which is isomorphic to $H_2(Y; \Z)$.  A Heegaard diagram is {\em
admissible} if every non-trivial periodic domain has both positive and
negative local multiplicities. It turns out that this condition
suffices to show that there are only finitely many non-zero terms with
some fixed $U$-power appearing in $\partial^- \x$ in the sum of
\eqref{eq:DefineDifferential}. To see then that the differential gives a
well-defined polynomial in $U$ (rather than a formal power series), we would
require stronger notions of admissibility (this is {\em strong admissibility}
of~\cite{OSz1}; note that this notion depends also on a choice of background
$\SpinC$ structure).  However, if we consider a variant of the theory (as we
will do here) where we set $U^n=0$ for some fixed $n\geq 1$, then the weak
form of admissibility suffices to ensure that the sums are finite, so
$(\CFm(Y)/(U^n=0), \partial ^-)$ is a chain complex.

In \cite{OSz1} it is shown that the homology of the resulting chain
complex is an invariant of the 3--manifold $Y$. A somewhat simpler
version of the invariant can be given by setting $U=0$, that is,
considering the free group generated over $\Field$ by $\Ta \cap \Tb$
and counting holomorphic disks only if $n_w (\phi )=0$, that is, the
holomorphic disks avoid the divisor $V_w$.

More generally, fix an integer $n\geq 1$, consider the chain complex
$(\CFm (Y), \partial ^-)$ and define
\[
(\CFm _{[n]} (Y), \partial ^-)=(\CFm (Y)/(U^n=0), \partial ^-).
\]
The boundary operator on the quotient is denoted by the same symbol as
on $\CFm (Y)$; notice that the operator on the quotient counts
holomorphic disks only when these pass through the basepoint at most
$n-1$ times.  We denote the homology groups of this quotient complex
by $\HFn (Y)$.  (In particular, it follows from the definition that
$\HFm _{[1]}(Y)=\HFa (Y)$.) It is easy to see that the truncated
groups $\HFn (Y)$ are diffeomorphism invariants of $Y$.  There is a
splitting of Heegaard Floer homology indexed by $\SpinC$ structures,
and in fact, this splitting is also a topological invariant of $Y$. We
will not treat this $\SpinC$ grading in the present paper.

The construction of Heegaard Floer homology can be extended to more general
Heegaard diagrams, where the handle decomposition results from a Morse
function with potentially more than one maximum and minimum. More specifically:

\begin{defn}
{\rm Let $\Sigma$ be an oriented surface of genus $g$, equipped with a
  $(g+k-1)$-tuple of disjoint, embedded $\alpha$--curves, the same
  number of disjoint, embedded $\beta$--curves, with the properties
  that
\begin{itemize}
\item the $\alpha$-curves span a 
$g$--dimensional subspace of $H_1 (\Sigma ; \Z)$, and
\item the $\beta$-curves span another 
$g$--dimensional subspace of $H_1 (\Sigma ; \Z)$.
\end{itemize}
This data specifies a three-manifold $Y$, and the tuple
$(\Sigma,\alphas,\betas)$ is called an
{\emph{extended Heegaard diagram for $Y$}}.
Suppose that we are given an extended Heegaard diagram as above so that
we can also find
$k$ basepoints $w_1,\ldots,w_k\in\Sigma-\alphas-\betas$,
satisfying the following conditions:
\begin{itemize}
\item each of the $k$ components of $\Sigma-\alphas$ contains exactly
  one of the basepoints in $\ws = \{w_i\}_{i=1}^k$ and 
\item each of the $k$ components of $\Sigma-\betas$ also contains
  exactly one of the basepoints $\ws = \{w_i\}_{i=1}^k$.
\end{itemize}
Then, we call the tuple $(\Sigma,\alphas,\betas,\ws)$ a 
\emph{multi-pointed Heegaard diagram for $Y$}.}
\end{defn}
Given a multi-pointed Heegaard diagram, we consider the chain
complex over $\Field[U]$ as before, endowed with the differential from
Equation~\eqref{eq:DefineDifferential}, where we use $n_w(\phi )$ to signify
the sum $\sum _{i=1}^k n_{w_i}(\phi)$, compare~\cite{Links}. We denote this
complex by $\CFmp(Y)$.

\begin{lem}
  \label{lem:CalculateCF2}
  The homology groups of $\CFmp(Y)$ determine $\HFm(Y)$ by the formula
  \[
  H_*(\CFmp(Y), \partial ^-)\cong \HFm(Y)\otimes H_*(T^{k-1}),
  \]
  where $T^{k-1}$ is the torus of dimension $k-1$, with $k$ denoting
  the number of basepoints.  Similarly, for any integer $n\geq 1$, we
  have
  \[H_*(\CFmp(Y)/(U^n=0), \partial ^-)
  \cong 
  \HFn(Y)\otimes H_*(T^{k-1}).
  \]
\end{lem}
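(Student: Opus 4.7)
The plan is to proceed by induction on $k$, the number of basepoints; the case $k=1$ is tautological since $\CFmp(Y) = \CFm(Y)$ and $T^0$ is a point. For the inductive step, I would first invoke the invariance of the chain homotopy type of $\CFmp(Y)$ (and of $\CFmp(Y)/(U^n=0)$) under multi-pointed Heegaard moves --- isotopies, handleslides, and Heegaard stabilizations performed in the complement of the basepoints $\ws$ --- which is established in~\cite{Links} by adapting the arguments of~\cite{OSz1}. Using it, I would replace the given $k$-pointed diagram by a standard model obtained from a fixed $(k-1)$-pointed diagram $(\Sigma', \alphas', \betas', \ws')$ for $Y$ by a single local stabilization supported near a chosen basepoint $w'\in\ws'$. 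The stabilization introduces two new basepoints $w_1, w_2$ together with a new pair $\alpha_{new}, \beta_{new}$ of curves meeting transversely in exactly two points $\theta_+, \theta_-$, arranged so that the move preserves the three-manifold $Y$ while the new local picture contributes a tensor factor with two generators to the chain complex.

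For this standardized diagram, I would stretch the almost-complex structure along the neck of the stabilization. A Gromov compactness argument, essentially the same one behind the stabilization invariance of Heegaard Floer homology, identifies in the limit the Maslov index one holomorphic disks as pairs consisting of a disk in the old piece and a disk in the new local piece. The chain complex correspondingly decomposes as
\[
\CFmp(Y) \cong \CFmp(Y)'\otimes_{\Field[U]} C,
\]
where $\CFmp(Y)'$ is the chain complex of the $(k-1)$-pointed diagram and $C$ is the local two-generator $\Field[U]$-complex. A direct count in the local model shows that there are exactly two embedded bigons from $\theta_+$ to $\theta_-$, one through each of $w_1$ and $w_2$, each contributing $U\cdot \theta_-$ to $\partial^-\theta_+$; these cancel over $\Field$, so $\partial_C = 0$ and $H_*(C) \cong \Field[U]\oplus\Field[U] \cong \Field[U]\otimes_\Field H_*(S^1)$. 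Since $C$ is a free $\Field[U]$-complex, the K\"unneth theorem combined with the inductive hypothesis yields
\[
H_*(\CFmp(Y)) \cong \HFm(Y) \otimes_\Field H_*(T^{k-2}) \otimes_\Field H_*(S^1) \cong \HFm(Y)\otimes_\Field H_*(T^{k-1}).
\]
The $U^n=0$ statement follows by exactly the same argument, since the tensor decomposition is compatible with the $U$-action and $C/(U^n=0)$ still has vanishing differential.

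The main obstacle is the neck-stretching decomposition of holomorphic disks into matched pairs in the two pieces. This is however the standard analytic step behind the stabilization invariance proof of~\cite{OSz1}, adapted to the multi-pointed context (including the bookkeeping required to track $n_{w_1}(\phi)$ and $n_{w_2}(\phi)$ separately in the exponent of $U$) in~\cite{Links}, so no fundamentally new analytic input is expected here.
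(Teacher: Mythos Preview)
Your approach differs substantially from the paper's. The paper's proof is essentially two sentences: by~\cite[Theorem~4.4]{Links}, the multi-pointed complex with $k$ distinct variables $U_1,\ldots,U_k$ (one for each basepoint) already computes $\HFm(Y)$; the single-variable complex $\CFmp(Y)$ is obtained from that one by setting all the $U_i$ equal, and since the $U_i$ are pairwise chain homotopic, each identification contributes a factor of $H_*(S^1)$ by the elementary homological algebra recorded in~\cite[Lemma~2.12]{MOST} (the cone on a null-homotopic endomorphism splits as two shifted copies). No Heegaard moves or holomorphic degenerations are invoked at all.

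Your geometric route, while plausible in outline, has a genuine gap at the tensor-product step. The index zero/three stabilization you describe does not change the genus of $\Sigma$, so there is no connect-sum neck to stretch in the usual sense. More seriously, the chain-level identification $\CFmp(Y)\cong \CFmp(Y)'\otimes_{\Field[U]}C$ fails as stated: an index-one Whitney disk $\phi$ in the $(k-1)$-pointed diagram whose domain has multiplicity $m\geq 1$ over the stabilization region lifts, in the $k$-pointed diagram, to a class of Maslov index $\mu(\phi)+2m$ (the point measure picks up $2m$ from the new $\theta$-coordinate appearing in both $\x$ and $\y$, while the Euler measure is unchanged). Thus it no longer contributes directly to the stabilized differential; instead there are new index-one domains, built from old disks with local pieces added or removed, and these do not assemble into a product differential $\partial'\otimes 1+1\otimes\partial_C$. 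One can show that the extra contributions cancel in pairs (via the periodic domain given by the difference of the two lunes), but carrying this out rigorously amounts to redoing the multi-variable stabilization argument that the paper simply quotes from~\cite{Links}.
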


\begin{proof}
  In~\cite[Theorem~4.4]{Links} it is shown that $\HFm(Y)$ can be
  calculated by a chain complex of the above type, except where there
  are $k$ different variables $U_i$ corresponding to the different
  basepoints. In the present context,  we implicitly set all these
  variables equal to one another. The first isomorphism now follows
  from simple homological algebra (compare
  \cite[Lemma~2.12]{MOST}). The second isomorphism follows similarly.
\end{proof}

\subsection*{Maslov index formulae}

In \cite{Lip}, Lipshitz developed a ``cylindrical reformulation'' of
Heegaard Floer homology, where his differentials count
certain pseudo-holomorphic curves in $[0,1]\times\R\times\Sigma$ satisfying
appropriate asymptotic conditions, in place of holomorphic disks in
the $g$-fold symmetric product of $\Sigma$. Using this reformulation,
he deduces concrete formulae for the Maslov index of a homology
class of Whitney disks in terms of combinatorial data on the Heegaard surface.
We recall these here.

Suppose for simplicity that in our Heegaard diagram every elementary
domain is a polygon.  (By isotoping the $\beta$--curves this property
can be easily achieved for any Heegaard diagram, cf. \cite{SW}.)  Then
for every component $D$ we can associate its \emph{Euler measure}
$e(D)$, which is equal to $1-\frac{m}{2}$ if $D$ is a $2m$--gon. This
quantity extends to formal linear combinations of the components: if
$\phi \in \pi _2 (\x, \y) $ is a given homology class of Whitney
disks, then the Euler measure $e(\phi )$ of $\phi$ is equal to
$e(D(\phi ))= e (\sum n_i D_i)=\sum n _i e(D_i)$.  The class $\phi \in
\pi _2 (\x , \y)$ also admits a \emph{point measure}, which is defined
as follows: Consider a corner point $x_i$ of $D(\phi)$, i.e. a
coordinate of $\x$ or $\y$. The average of the multiplicities of the
four domains of $\Sigma - \alpha - \beta$ in $D (\phi )=\sum
n_{d_i}(\phi )D_i$ meeting at $x_i$ is the {\em local multiplicity of
  $\phi$ at $x_i$}, and the sum of these quantities for all
coordinates of $\x$ and $\y$ gives the point measure $p(\phi )$ of
$\phi \in \pi _2 (\x , \y)$.  It turns out that the Maslov index $\mu
(\phi )$ can be conveniently expressed in terms of the Euler and point
measures as follows:
\begin{thm}(\cite[Corollary~4.3]{Lip}) \label{thm:lip1}
If $\phi \in \pi _2 (\x , \y )$ is a non-negative domain then $\mu (\phi )
= e(\phi ) + p (\phi )$. \qed
\end{thm}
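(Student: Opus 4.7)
The plan is to pass to Lipshitz's cylindrical reformulation of Heegaard Floer homology, in which a pseudoholomorphic disk in $\Sym^g(\Sigma)$ representing $\phi$ is traded for a pseudoholomorphic map
$$u\colon (S,\partial S)\to\bigl([0,1]\times\R\times\Sigma,\;(\{0\}\times\R\times\alphas)\cup(\{1\}\times\R\times\betas)\bigr),$$
asymptotic to the $g$-tuple $\x$ at $-\infty$ and to $\y$ at $+\infty$. Once this translation is in place, $\mu(\phi)$ equals the virtual dimension of the moduli space at $u$, which is computed as the Fredholm index of the linearized $\bar\partial_J$ operator $D_u$; the strategy is then to evaluate this Fredholm index and match it term-by-term with $e(\phi)+p(\phi)$.

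First I would extract the topological data of the source. The projection $\pi_\Sigma\circ u$ is a branched map to $\Sigma$ whose image, counted with multiplicity, is exactly $D(\phi)$; Riemann--Hurwitz then expresses $\chi(S)$ in terms of a ``shadow'' Euler characteristic of $D(\phi)$ together with ramification corrections at interior branch points and at the corners associated to $\x$ and $\y$. Next I would apply the Riemann--Roch theorem for Cauchy--Riemann operators with totally real boundary: the index splits as a bulk contribution, controlled by $\chi(S)$ and a first Chern class pairing, plus boundary Maslov contributions along the $\alpha$- and $\beta$-arcs. The bulk piece, when reassembled, agrees with $e(D(\phi))$ up to the corner corrections, while the boundary Maslov index along each arc is topologically zero away from corners, so that the only surviving nonbulk contributions are localized at the intersection points $x_i\in\x\cup\y$.

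The main obstacle is the careful local analysis at each corner $x_i$, where the boundary condition rotates as one passes between the $\alpha$- and $\beta$-cylinders and where the naive projection to $\Sigma$ ceases to be a smooth cover. A local model computation near $x_i$ --- essentially a brick with prescribed multiplicities $(n_{NE},n_{NW},n_{SE},n_{SW})$ in the four adjacent elementary domains --- shows that the corner index contribution is precisely the average $\tfrac14(n_{NE}+n_{NW}+n_{SE}+n_{SW})$, i.e.\ the local multiplicity of $\phi$ at $x_i$ as defined before the theorem. Summing over all $2g$ corners of $D(\phi)$ and adding the bulk term then produces the identity $\mu(\phi)=e(\phi)+p(\phi)$. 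The non-negativity hypothesis enters at exactly this step: it guarantees that the local model at each corner is a genuine cover with positive multiplicities, so that the universal index calculation applies unchanged and no cancellations are possible between positive and negative regions that would invalidate the branched-cover picture of $S$.
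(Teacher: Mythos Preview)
The paper does not prove this theorem at all; it is quoted from Lipshitz \cite{Lip} with a bare \qed, so there is no argument in the paper to compare against. Your sketch is, in broad outline, Lipshitz's own proof: pass to the cylindrical picture, identify $\mu(\phi)$ with the Fredholm index of the linearized operator at a representative $u\colon S\to[0,1]\times\R\times\Sigma$, compute that index via a Riemann--Roch type formula with totally real boundary conditions, and separate the result into a bulk term (giving $e(\phi)$) and localized corner contributions (giving $p(\phi)$). At that level of detail the outline is correct.

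One point should be corrected, however. Your explanation of where non-negativity enters is not right. Lipshitz's identity $\mu(\phi)=e(\phi)+p(\phi)$ is a purely topological formula valid for \emph{every} homology class of Whitney disks, not only non-negative ones: both sides are additive under juxtaposition of domains and agree on an additive generating set, so the identity extends linearly. The index computation does not require $u$ to be pseudo-holomorphic, nor does the corner model require the adjacent multiplicities to be positive; any smooth representative in the homotopy class will do, and the local index contribution at a corner is a linear-algebraic calculation insensitive to the signs of the surrounding multiplicities. The paper states the result only for non-negative $\phi$ simply because that is the only case it needs, and because the neighboring statements (Theorems~\ref{thm:lip2} and~\ref{thm:lip3}) genuinely rely on non-negativity for the branched-cover description of the source surface. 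So your sketch is sound as an outline of the cited proof, but the sentence attributing an essential role to non-negativity should be dropped.
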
 
For $\phi \in \pi _2 (\x , \y )$ let $\Delta (\phi )$ denote the
intersection number of $\phi $ with the diagonal $\Delta \subset \Sym
^g (\Sigma _g )$.  A formula similar to the one in
Theorem~\ref{thm:lip1} relates the Euler measure $e(\phi )$ to the
Maslov index $\mu (\phi )$ with the help of the intersection number
$\Delta (\phi ) $:

\begin{thm}(\cite{Lip}) \label{thm:lip2}
For a non-negative domain $\phi \in \pi _2 (\x, \y )$ we have $\mu
(\phi ) = \Delta (\phi ) + 2e(\phi ).$ \qed
\end{thm}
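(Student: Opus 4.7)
The plan is to derive Theorem~\ref{thm:lip2} from the already-stated Theorem~\ref{thm:lip1} by establishing the auxiliary identity
\[
p(\phi) - e(\phi) = \Delta(\phi).
\]
Once this is in hand, substituting $p(\phi) = e(\phi) + \Delta(\phi)$ into $\mu(\phi) = e(\phi) + p(\phi)$ yields $\mu(\phi) = 2e(\phi) + \Delta(\phi)$ immediately.

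To prove the auxiliary identity I would work in Lipshitz's cylindrical reformulation, in which a non-negative homology class $\phi$ is represented by $J$--holomorphic maps $u : S \to [0,1] \times \R \times \Sigma$ from a Riemann surface $S$ with boundary (with boundary punctures asymptotic to the chords at the coordinates of $\x$ and $\y$, boundary arcs mapping to $\{0\} \times \R \times \alphas$ and $\{1\} \times \R \times \betas$). Under the standard dictionary between this cylindrical picture and the symmetric product picture, the intersection number $\Delta(\phi)$ with the diagonal is precisely the total ramification number of the projection
\[
\pi_D \circ u : S \longrightarrow [0,1] \times \R,
\]
since a point of $S$ maps to $\Delta \subset \Sym^g(\Sigma)$ exactly when two sheets of the $g$--fold branched cover $\pi_D \circ u$ come together. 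Thus $\Delta(\phi)$ admits an intrinsic description as a branching count on $S$.

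The next step is to compute $\chi(S)$ in two different ways and compare. On one hand, applying the Riemann--Hurwitz formula to the degree--$g$ branched cover $\pi_D \circ u : S \to [0,1] \times \R$, with the appropriate boundary corrections at the punctures coming from $\x$ and $\y$, expresses $\chi(S)$ as an explicit combination of $g$ and $\Delta(\phi)$. On the other hand, the composition $\pi_\Sigma \circ u : S \to \Sigma$ realizes the domain $D(\phi)$ as a branched cover of the Heegaard surface, so $\chi(S)$ can be computed combinatorially from $D(\phi)$: the contribution from the interiors of the polygonal elementary regions assembles to $e(\phi)$, while the contribution from corners at coordinates of $\x$ and $\y$ assembles to $-p(\phi)$ (up to a constant depending only on $g$, which cancels with the corresponding constant from the Riemann--Hurwitz side). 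Equating the two formulas for $\chi(S)$ isolates $\Delta(\phi) = p(\phi) - e(\phi)$, as desired.

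The main obstacle is the careful boundary and corner bookkeeping in both Euler characteristic computations. At each corner of $D(\phi)$ the source surface $S$ has a puncture, and one must match the local model of the branched cover $\pi_D \circ u$ near such a puncture against the average--of--four--multiplicities definition of $p(\phi)$ in order to relate ramification at the boundary to the point measure. Once this local analysis is in place, the global identity follows from adding up contributions over all elementary regions and all corners, and the theorem is then an immediate consequence of Theorem~\ref{thm:lip1}.
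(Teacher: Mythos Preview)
The paper does not give its own proof of this statement: note the \texttt{\textbackslash qed} immediately following the theorem. Theorem~\ref{thm:lip2}, like Theorem~\ref{thm:lip1}, is quoted from Lipshitz~\cite{Lip} as a black box, so there is no in-paper argument to compare against.

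That said, your sketch is essentially the route Lipshitz takes. A couple of comments. First, your identification of $\Delta(\phi)$ with the ramification of $\pi_D\circ u$ is correct but requires that the cylindrical representative $u$ be embedded (or at least generic): intersection with the diagonal records when two points in the $\Sym^g$-tuple coincide in $\Sigma$, which for an embedded $u$ is exactly when two sheets of $\pi_D\circ u$ collide. Second, the combinatorial computation of $\chi(S)$ you describe is precisely the content of Lipshitz's Gauss--Bonnet argument; indeed, combining the paper's Equation~\eqref{e:euler1} (which is your Riemann--Hurwitz step, once the trivial strips are included) with the combinatorial formula $\chi(T)=g+e(\phi)-p(\phi)$ gives $\Delta(\phi)=p(\phi)-e(\phi)$ directly, and then Theorem~\ref{thm:lip1} finishes the job exactly as you say. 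So your proposal is a faithful reconstruction of the original argument rather than an alternative to anything in this paper.
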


As we have already mentioned, the above results follow from the
``cylindrical reformulation'' of Heegaard Floer theory. We state
another consequence of this theory which is of particular importance
to us.

\begin{thm}(\cite[Lemma~4.1]{Lip}) \label{thm:lip3}
  If $\phi\in\pi_2(\x,\y)$ is a non-negative homology class of Whitney
  disks, then there is an associated surface $T$ decorated with $2g$
  boundary points $\{p_i,q_i\}_{i=1}^g$ and a smooth
map $u\colon T \longrightarrow \Sigma$ satisfying the following
boundary conditions:
  \begin{itemize}
    \item The endpoints of the components of $\partial T-\{ p_i, q_i
      \}_{i=1} ^g$ are labelled with some $p_i$ and $q_i$ in an
      alternating manner.
    \item 
      The image of $\{p_1,\dots,p_g\}$ is the tuple $\x$, while the
      image of $\{q_1,\dots,q_g\}$ is the tuple $\y$.
    \item
      Each component $A$ of $\partial T-\{p_i,q_i\}_{i=1}^g$ which is
      oriented (with respect to the boundary orientation coming from
      $T$) as an arc from some $p_i$ to some $q_j$ is mapped into some
      $\alpha_i$.
    \item
      Each component $B$ of $\partial T-\{p_i,q_i\}_{i=1}^g$ which is
      oriented (with respect to the boundary orientation coming from
      $T$) as an arc from some $q_i$ to some $p_j$ is mapped into some
      $\beta_i$.
  \end{itemize}
  Moreover,
  \begin{itemize}
    \item The map $u$ represents the two-chain associated to $\phi$.
    \item 
      Let $d$ denote the number of coordinates of $\x$ where the local
      multiplicity of $\phi$ is non-zero; the remaining $g-d$
      coordinates of $\x$ coincide with $g-d$ coordinates of $\y$.
      Then, our source surface $T$ decomposes as the disjoint union
      $T=S\coprod P$, where $P$ consists of a collection of $g-d$
      disks which are mapped to $\Sigma$ via constant maps.  The Euler
      characteristic of $S$ is determined by the homology class of
      $\phi$ by the formula
\begin{equation}\label{e:euler1}
\chi (S)=d-\Delta (\phi ).
\end{equation}
\item The map $u$, when restricted to $S$,  is a branched cover onto its image.

\end{itemize}
\end{thm}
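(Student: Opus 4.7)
The plan is to construct $T$ explicitly from the two-chain $D(\phi) = \sum_i n_{d_i}(\phi) D_i$ as a ``branched cover with corners'' of $\Sigma$, check the boundary conditions by inspection, and derive the Euler characteristic formula from a Riemann--Hurwitz calculation. First I would build $T$ combinatorially: for each elementary domain $D_i$ take $n_i := n_{d_i}(\phi)$ disjoint oriented copies, and glue these along their $\alpha$- and $\beta$-boundary arcs compatibly with the multiplicities on the two adjacent elementary domains. At each corner (a coordinate of $\x$ or $\y$), the surrounding copies are glued cyclically according to the local multiplicity of $\phi$ there, producing a single boundary puncture labelled $p_j$ or $q_j$ respectively. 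For each of the $g-d$ coordinates at which the local multiplicity of $\phi$ vanishes, the boundary of $D(\phi)$ cannot terminate, so the corresponding $\x$- and $\y$-coordinates must agree; for each such coincidence attach a disk mapped to the shared point by the constant map. These disks form $P$, the rest forms $S$, and $u$ is the inclusion on each copy of $D_i$ together with the constant map on $P$. Then $u_*[T] = D(\phi)$ and $u|_S$ is a branched cover onto its image.

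For the boundary conditions, traversing $\partial D(\phi)$ with the induced orientation of $\Sigma$ one alternates between $\alpha$- and $\beta$-arcs with switches at the coordinates of $\x$ and $\y$; at an $\x$-coordinate the outgoing arc is the $\alpha$-arc and the incoming is the $\beta$-arc, while at a $\y$-coordinate the roles reverse. Under the labeling above, each arc oriented from some $p_i$ to some $q_j$ maps into an $\alpha$-curve and each arc oriented from some $q_i$ to some $p_j$ maps into a $\beta$-curve, which is exactly the claimed alternating pattern.

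The main calculation is the Euler characteristic formula $\chi(S) = d - \Delta(\phi)$. The branch points of $u|_S$ are precisely the interior points above which distinct sheets collide, and their total count equals the intersection number $\phi \cdot \Delta$ in $\Sym^g(\Sigma)$, namely $\Delta(\phi)$. A Riemann--Hurwitz count for $u|_S$, viewed as a branched cover onto its image and adapted to surfaces with corners (where the total weighted corner contribution is precisely $d$), yields the formula after simplification via Theorem~\ref{thm:lip2}. The hard part is this Riemann--Hurwitz bookkeeping: since the degree of $u|_S$ jumps across the $\alpha$- and $\beta$-curves, $u|_S$ is not a classical branched cover of a closed surface, so one must either cap off boundary components of $S$ by auxiliary disks in order to extend $u$ to an honest branched cover of a closed surface, or else perform a direct cellular count on $S$ inherited from the $\alpha \cup \beta$-decomposition of $\Sigma$. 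Either way, matching the fractional corner contributions (the point measure $p(\phi)$) with the sheet-collision count is where the real work lies.
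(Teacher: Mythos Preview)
The paper does not actually prove this theorem; it simply cites Lipshitz and remarks that the surface $T$ and the map $u$ arise by taking a pseudo-holomorphic curve $v\colon S\to [0,1]\times\R\times\Sigma$ in the cylindrical reformulation and post-composing with the projection to $\Sigma$. Your combinatorial construction is therefore a genuinely different route, but the Euler-characteristic step contains a real gap.

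The quantity $\Delta(\phi)$ does \emph{not} count branch points of $u|_S\colon S\to\Sigma$. In the cylindrical picture the source $S$ carries \emph{two} projections: $\pi_\Sigma\circ v=u$ to $\Sigma$, and $\pi_D\circ v$ to the strip $[0,1]\times\R$. It is the second one that is a $g$-fold branched cover of the disk, and its branch points are precisely where the corresponding map $D\to\Sym^g(\Sigma)$ meets the diagonal; that is what $\Delta(\phi)$ counts. Riemann--Hurwitz for $\pi_D\circ v$ then gives $\chi(T)=g-\Delta(\phi)$, and removing the $g-d$ constant disks yields $\chi(S)=d-\Delta(\phi)$. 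By contrast, the branch count of the map $u\colon S\to\Sigma$ you are trying to use is the separate quantity $b=\mu(\phi)-e(\phi)-\tfrac{1}{2}d$ of Equation~\eqref{e:branch}, which in general differs from $\Delta(\phi)$.

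Your gluing construction produces no map to a disk, so the relevant Riemann--Hurwitz argument is unavailable to you. Worse, the gluing of sheets along the interior $\alpha$- and $\beta$-edges is not determined by the local multiplicities of $D(\phi)$ alone (this is exactly why the paper notes that the topology of $S$ is not uniquely determined by $\phi$), so different gluings of the same domain yield surfaces of different Euler characteristic, and nothing in your prescription singles out the value $d-\Delta(\phi)$. To salvage a combinatorial argument you would have to either specify the gluing so that a degree-$g$ branched map to the disk can be built by hand (essentially reconstructing Lipshitz's tautological correspondence), or prove $\chi(S)=d-p(\phi)+e(\phi)$ by a direct cellular count after first pinning down the correct gluing.
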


The above theorem is a consequence of the main result of~\cite{Lip},
where (for suitable almost-complex structure) the pseudo-holomorphic
disks in $\Sym ^g(\Sigma)$ are identified with pseudo-holomorphic maps
$$v\colon S \longrightarrow [0,1]\times\Sigma\times \R.$$ The map $u$
referred to in the statement of  Theorem~\ref{thm:lip3}
is obtained from the map $v$ by post-composing with the projection to
$\Sigma$. 

Note that the topology of $S$ is not (necessarily) uniquely determined
from the homology class of $\phi$; however, according to 
Equation~\eqref{e:euler1}, its Euler characteristic is.
Moreover, by Theorem~\ref{thm:lip2} this expression can be turned into
\begin{equation}\label{e:euler2}
\chi (S)=d-\mu (\phi )+2e(\phi ).
\end{equation}

Let $b$ denote the number of branch points of the branched cover map
$u$ when restricted to $S$. (Branch points on the boundary are counted
with multiplicity $\frac{1}{2}$ each.) This number then can be computed 
by the following formula of \cite[Proposition~4.2]{Lip}:
\begin{equation}\label{e:branch}
b=\mu (\phi )-e(\phi ) -\frac{1}{2}d.
\end{equation}
\begin{rem}\label{r:sharp}
{\rm Notice that when $d=2p(\phi)$ then the above formula (together
  with Theorem~\ref{thm:lip1}) implies $b=0$, hence in this case the
  map $u$ is a local diffeomorphism.}
\end{rem}

\section{Heegaard diagrams}
\label{sec:HeegaardDiagrams}
In the present section, we describe the class of Heegaard
diagrams used to establish Theorem~\ref{t:main}. After introducing
these diagrams, we collect some of their salient features.

A classical result of Hilden and Montesinos \cite{Hilden, Monte}
states that any closed, oriented 3--manifold $Y$ can be presented as a
simple 3--fold branched cover of $S^3$. Fix such a branched cover $\pi
\colon Y \longrightarrow S^3$, and let $R_Y\subset S^3$ denote its
branch set (or ramification set). The link $R_Y$ (as any link in
$S^3$) admits a \emph{grid presentation}, cf. \cite{BirmanMenasco,
  Brunn, Cromwell}.  This means that $S^3$ admits a genus--one
Heegaard diagram with $n$ parallel curves $\as=\{{\mathfrak
  a}_i\}_{i=1}^n$ specifying one handlebody, and $n$ parallel curves
$\bs=\{{\mathfrak b}_i\}_{i=1}^n$ orthogonal to the ${\mathfrak a}_i$,
and with two distinguished points (one of them usually denoted by an
$O$, the other one by an $X$) in every component of $T^2-\as$ and of
$T^2-\bs$. (In a subsequent construction we will enlarge both sets
$\as$ and $\bs$, but always so that curves in $\as$ are embedded and
pairwise disjoint, and so are curves in $\bs$.)  The link can be
recovered from this picture by connecting the $O$'s to the $X$'s in
one handlebody and the $X$'s to the $O$'s in the other. Representing
the torus by a square with its opposite sides identified, we can
picture this on a square grid with an $O$ and an $X$ in each row and
column, and $R_Y$ is given by connecting the letters in the same row
or column, with the convention that the vertical segment crosses over
the horizontal one. Note that our notation here has slightly departed
from the notation from Section~\ref{sec:Preliminaries}, where $\as$
would have been denoted $\alphas$ and $\bs$ would have been denoted
$\betas$. We do this to reserve $\alphas$ and $\betas$ for the
Heegaard diagram for $Y$, rather than $S^3$. We denote the set of all
the $O's$ and $X's$ by $\Os$ and $\Xs$, respectively.

The simple triple branched cover along $R_Y$ in its grid presentation
gives a multi-pointed Heegaard diagram for $Y$ as follows.  Let
$\Sigma\subset Y$ denote the inverse image of the Heegaard torus of
$S^3$ under the triple branched covering. Since $\Sigma$ is a simple
3--fold branched cover of the torus in $2n$ points, its genus is
$n+1$.  Since each $\as$-curve in the torus $T^2 \subset S^3$ bounds a
disk in the corresponding handlebody, it lifts to three disjoint
copies in $\Sigma$ (and similarly for the $\bs$--curves).  An annular
region $A_{i,i+1}$ between two neighboring $\as$--curves ${\mathfrak
  a}_i$ and ${\mathfrak a}_{i+1}$ in the grid diagram (containing two
branch points) lifts to a two--component surface: one of the
components is an annulus and the other is a 4--punctured sphere, as in
Figure~\ref{fig:branch1}.
\begin{figure}[ht]
\begin{center}
\input{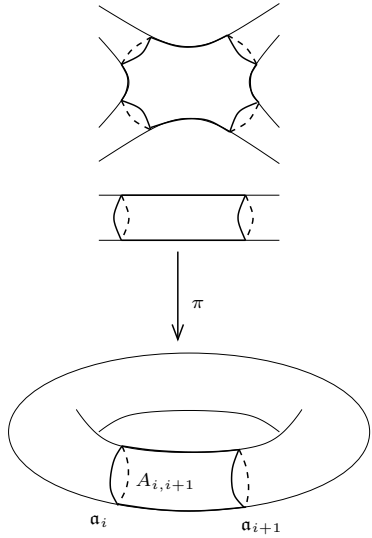}
\end{center}
\caption{Domains in the Heegaard decomposition of the triple branched
  cover.}
\label{fig:branch1}
\end{figure}
For every such region $A_{i,i+1}$ in the torus $T^2\subset S^3$ we
introduce a new curve ${\mathfrak a}_{i+\frac{1}{2}}\subset A_{i,i+1}$, which we
also include in the set of curves $\as$, and which is parallel
(i.e. isotopic to, and disjoint from) both ${\mathfrak a}_i$ and ${\mathfrak a}_{i+1}$, but
separates the two branch points in $A_{i,i+1}$, cf. Figure~\ref{f:new}.  Notice that
there are two essentially different choices for such a curve,
depending on the side of the new curve $X$ (or $O$) is positioned.  We
can choose ${\mathfrak a}_{i+\frac{1}{2}}$ to be either of these two possible
curves.
\begin{figure}[ht]
\begin{center}
\setlength{\unitlength}{1mm}
\input{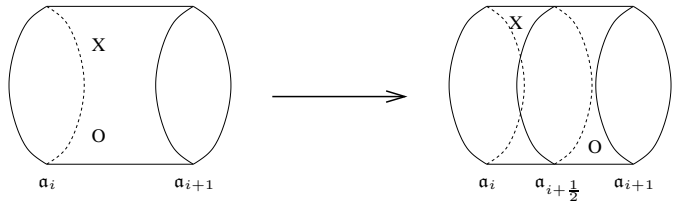}
\end{center}
\caption{Introduction of a new $\alpha$--curve.}
\label{f:new}
\end{figure}
The inverse image of this new curve in $\Sigma$ has
two components: one of these components is in the annular component of
$\pi ^{-1}(A_{i,i+1})$, while the other one 
(which double--covers ${\mathfrak a}_{i+\frac{1}{2}}\subset T^2$) is
in the 4--punctured sphere component. This latter component
obviously
separates the 4--punctured sphere into two pairs of pants,
cf. Figure~\ref{fig:branch2}.  
\begin{figure}[ht]
\begin{center}
\setlength{\unitlength}{1mm}
\input{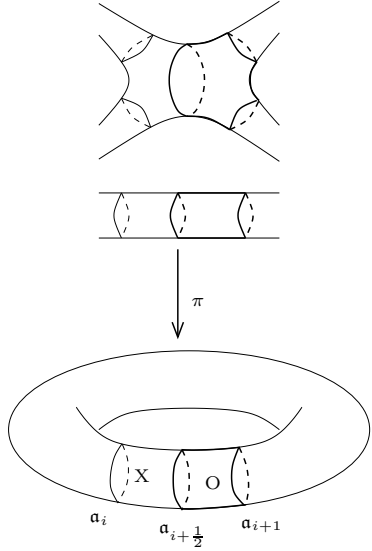}
\end{center}
\caption{Domains in the Heegaard decomposition of the triple branched
  cover after introducing the new curves.}
\label{fig:branch2}
\end{figure}
We also add the similar half--indexed curves in the
$\bs$--direction. These newly chosen curves (as well as the
corresponding circles in the Heegaard diagram for $Y$) will be
referred to as \emph{new curves in the grid diagram}, while the $\as$-- and
$\bs$--curves in the original grid diagram for $R_Y$ 
will be the \emph{old curves}.  We will call the grid
diagram together with the new curves an \emph{extended grid
  diagram}.

Consider now the extended grid diagram for $R_Y$. Both $\as$ and $\bs$
partition the torus into $2n$ annuli, each annulus containing a single
branch point, resulting in a $2n$--pointed Heegaard diagram for
$S^3$. Clearly, every elementary domain in this diagram is a
rectangle. Consequently, in the branched cover
$(\Sigma,\pi^{-1}(\as),\pi^{-1}(\bs))$ of the extended grid diagram
(thought of as an extended Heegaard diagram for $Y$) every elementary
domain is either an octagon (if it double covers an elementary
rectangle with one $X$ or an $O$ in it), or otherwise it is a
rectangle.  To get a multi-pointed Heegaard diagram for $Y$ (as
required in the construction for Heegaard Floer homology), we have
place basepoints in some of the elementary domains. By simply putting
the basepoints into the octagons in $\Sigma$ we do not get a
multi-pointed Heegaard diagram, since not every component of the
complement of the preimages of the $\as$-- or the $\bs$--curves
contains a basepoint in this way: the annular components of $\pi
^{-1}(A_{i,i+1})$ do not contain octagons, hence do not contain
basepoints either. To fix this problem we could either put in more
basepoints, or we could delete some curves in $\pi^{-1}(\as)$ and
$\pi^{-1}(\bs)$ to specify $\alphas$ and $\betas$.  We choose to do
the latter, by applying the following conventions: First, choose
coherent orientations for the ${\mathfrak a}_i\in\as$ and the
${\mathfrak b}_j\in\bs$, as specified by a basis $A,B\in H_1(T^2)$;
i.e., for all $i=1,\dots,n$, curves ${\mathfrak a}_i$ and ${\mathfrak
  b}_i$ (with suitable orientations) represent the homology class $A$
and $B$ respectively.  Such a choice also induces a cyclic ordering on
the curves in $\as$: the curve ${\mathfrak a}_{i+1}$ follows
${\mathfrak a}_i$ if the orientation on ${\mathfrak a}_{i+1}$, thought
of as a boundary component of $A_{i,i+1}\subset T^2$ with its induced
orientation, agrees with the orientation specified by $A$.  The curve
${\mathfrak {a}}_{i+1}$ will be also called the {\em right} endcircle
of the annulus $A_{i,i+1}$.  We have an analogous cyclic ordering on
the $\bs$--curves. With this orientation convention in place, we refine our
definition of a Heegaard diagram for $Y$ as follows:

\begin{enumerate}
\item For a new curve ${\mathfrak a}_{i+\frac{1}{2}}$ we 
  consider
  only the component $\alpha_{i+\frac{1}{2}}$ of the inverse image of
  ${\mathfrak a}_{i+\frac{1}{2}}$ in $\Sigma$ which double covers the curve
  downstairs (i.e., the one separating the 4--punctured sphere
  component into two pairs of pants). Similar choice applies for every
  new $\beta$--curve.
\item For an old $\as$--curves we keep as $\alphas$-curves 
  only the two components of
  the inverse image of the \emph{right} endcircle of the annulus $A_{i,i+1}
  \subset T^2 \subset S^3$ which are in the 4--punctured sphere (and
  do not consider the one in the annular component in $\pi
  ^{-1}(A_{i,i+1})$). We apply the same principle to the
  $\beta$--circles.  
\end{enumerate}

\begin{thm}
  \label{thm:Adapted}
  The construction of $\Sigma$, $\alphas$, $\betas$ (as specified by
  the extended grid diagram $\as$, $\bs$, and the basis $A, B\in
  H_1(T^2)$) together with a choice of $k$ basepoints, one in each
  octogonal elementary domain, provides a multipunctured Heegaard
  diagram for $Y$.
\end{thm}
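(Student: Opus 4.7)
The plan is to verify each clause of the definition of a multi-pointed Heegaard diagram for $Y$: (a) $\Sigma$ is a closed, oriented surface of the correct genus, (b) each of $\alphas$ and $\betas$ consists of $g+k-1$ disjoint, embedded simple closed curves spanning a $g$-dimensional subspace of $H_1(\Sigma;\Z)$, (c) the associated $3$-manifold is $Y$, and (d) each component of $\Sigma-\alphas$ (respectively $\Sigma-\betas$) contains exactly one of the $k$ octagonal basepoints.

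First, for (a), Riemann--Hurwitz applied to the simple triple cover $\pi\colon\Sigma\to T^2$ with $2n$ simple branch points gives $\chi(\Sigma)=3\cdot 0 - 2n=-2n$, so $\Sigma$ has genus $n+1$. For the count in (b), conventions (1) and (2) retain one lift of each of the $n$ new curves and two lifts of each of the $n$ old curves, giving $3n$ circles in each of $\alphas$ and $\betas$. Since each of the $2n$ branched rectangles downstairs lifts to a single octagonal domain upstairs, we have $k=2n$ basepoints, matching the expected $g+k-1=(n+1)+2n-1=3n$. Disjointness and embeddedness transfer directly from the downstairs picture.

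To establish (c) together with the remaining homological condition in (b), I would argue that the preimage $\pi^{-1}(U_\alpha)$ of the solid-torus handlebody $U_\alpha$ in the grid-induced Heegaard decomposition of $S^3$ is itself a handlebody of genus $n+1$: the arcs of $R_Y$ in $U_\alpha$ form a trivial tangle (one unknotted arc in each of the $n$ $3$-balls obtained by cutting $U_\alpha$ along its meridional disks), and the triple simple branched cover of such a piece along a trivial arc is a disjoint union of two $3$-balls; reassembling $n$ such pieces along the lifts of the meridional disks (each having three lifts) produces a handlebody whose Euler characteristic $\tfrac12\chi(\Sigma)=-n$ forces genus $n+1$. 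Each old lift in $\alphas$ bounds the corresponding lift of the meridional disk for $\mathfrak{a}_i\in\as$, while each new lift $\alpha_{i+\frac{1}{2}}$ bounds the branched cover of the meridional disk for $\mathfrak{a}_{i+\frac{1}{2}}$: the latter disk meets the unique horizontal arc of $R_Y$ in the slice $A_{i,i+1}$ transversally in one point (since $\mathfrak{a}_{i+\frac{1}{2}}$ separates the $X$ from the $O$), so its triple branched cover consists of an unbranched disk (the fixed sheet) plus a single disk that double-covers the original and whose boundary is precisely $\alpha_{i+\frac{1}{2}}$. The analogous story on the $\betas$-side and gluing the two handlebodies along $\Sigma$ recovers $Y$.

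The remaining and most delicate clause is (d). The handlebody description furnished above implies that the $3n$ compressing disks bounded by $\alphas$ cut $\pi^{-1}(U_\alpha)$ into a disjoint union of $3$-balls; consequently each component of $\Sigma-\alphas$ is planar. Since cutting preserves Euler characteristic and the $3n$ curves contribute $6n$ boundary circles in total, a component with $b_i$ boundary circles contributes $2-b_i$ to $\chi$, and $\sum(2-b_i)=-2n$ with $\sum b_i=6n$ forces the number of components to be exactly $2n$. Locally, the curve $\alpha_{i+\frac{1}{2}}$ separates the $4$-punctured sphere component of $\pi^{-1}(A_{i,i+1})$ into two pairs of pants, each containing one of the two octagons of that region, so distinct octagons lie in distinct components of $\Sigma-\alphas$; matching $2n$ octagons to $2n$ components, each component contains exactly one basepoint. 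The statement for $\betas$ is symmetric. The main obstacle is establishing planarity of each component of $\Sigma-\alphas$, which rests on the handlebody description of $\pi^{-1}(U_\alpha)$; absent this input, the interplay of convention (2) with the monodromy pattern of the cover would allow components to extend across the annular components of $\pi^{-1}(A_{i,i+1})$ in a way requiring careful local bookkeeping.
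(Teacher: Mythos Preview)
Your argument is well-organized and covers most of the ground, but it diverges from the paper's route and contains a genuine gap at the step you yourself flag as ``the main obstacle.''

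The paper proceeds quite differently: it starts from the full pull-back of the grid diagram (which \emph{automatically} gives an extended Heegaard diagram for $Y$ with $3n$ $\alpha$-- and $\beta$--curves, since the grid is a Heegaard diagram for $S^3$ and $\pi$ is a branched cover), then adds the lifts of the new curves (which bound disks, so the diagram remains valid with $5n$ curves), and finally deletes curves that are homologically dependent on the remaining ones. This reduces (c) to a triviality, with no need for your tangle analysis of $\pi^{-1}(U_\alpha)$. For (d), the paper simply asserts that the deletion convention is designed so that $\alphas$ gives a \emph{pair-of-pants decomposition} of $\Sigma$, appealing to the explicit local picture established just before the theorem (the new curve $\alpha_{i+\frac{1}{2}}$ cuts the four-punctured sphere into two pairs of pants, and convention (2) is exactly what is needed for the annular components to be absorbed correctly).

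Your alternative route via Euler characteristic and handlebodies is attractive, but the inference ``the $3n$ compressing disks bounded by $\alphas$ cut $\pi^{-1}(U_\alpha)$ into a disjoint union of $3$-balls'' does not follow from what you have established. Knowing that each curve bounds a disk in a genus-$(n+1)$ handlebody does not force the $3n$ disks to form a complete system: cutting along disjoint compressing disks yields a union of handlebodies whose total genus is $(n+1)$ minus the number of non-separating cuts, and nothing you have said pins this number down. Equivalently, your Euler-characteristic computation only yields $m = 2n + \sum g_j$ for the number $m$ of components of $\Sigma - \alphas$, so you need $\sum g_j = 0$ as an independent input. The handlebody description does not supply this; it is precisely the ``careful local bookkeeping'' you mention (tracking how the annular components of $\pi^{-1}(A_{i,i+1})$ attach across consecutive slices under convention (2)) that is required, and this is exactly what the paper's pair-of-pants assertion encodes. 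A secondary issue: your argument that distinct octagons lie in distinct components only treats the two octagons within a single $\pi^{-1}(A_{i,i+1})$; you also need to separate octagons lying in different annular preimages, which again relies on the same local analysis.
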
 
\begin{proof}
The pull-back of the grid diagram from $S^3$ along $\pi $ obviously
defines a Heegaard diagram of $Y$ with Heegaard surface of genus $n+1$
and $3n$ $\alpha$-- and $\beta$--curves. Since the inverse images of
the new curves all bound disks in the appropriate handlebody of the
Heegaard decomposition of $Y$, by adding these curves we still have an
extended Heegaard diagram of $Y$ (now with $5n$ $\alpha$-- and
$\beta$--curves). Since we deleted only curves which in homology
linearly depended on the remaining curves, after the deletion we still
have a Heegaard diagram for $Y$.  The number of $\alpha$--curves (and
$\beta$--curves) in the resulting diagram is equal to $3n$, hence
placing $2n$ basepoints we get a multi-pointed Heegaard diagram. The
algorithm for deleting the curves is designed in such a way that the
collection of the $\alpha$--curves (and similarly the $\beta$--curves)
gives a pair of pants decomposition of the Heegaard surface
$\Sigma$. Since by construction each pair of pants contains a unique
octagon, and we placed the basepoints exactly in the octagons, it
follows that the result is a multi-pointed Heegaard diagram for $Y$.
\end{proof}

\begin{defn} 
{\rm Given a simple 3--fold branched cover $\pi \colon Y
  \longrightarrow S^3$, fix an extended grid diagram
  $(T^2,\as,\bs,\Os,\Xs)$ for the branch set $R_Y$ of $\pi$ in $S^3$,
  and a basis $A, B$ for $H_1(T^2; \Z)$ representing $\as$ and $\bs$
  respectively.  According to Theorem~\ref{thm:Adapted}, these choices
  determine a multi-pointed Heegaard diagram
  $(\Sigma,\alphas,\betas,\ws)$ for $Y$, which we call \emph{adapted
    to the choices} or, when we wish to suppress the choices, simply
  an \emph{adapted Heegaard diagram for $Y$}.  In an adapted Heegaard
  diagram an $\alpha$- or a $\beta$-curve is called a \emph{new curve
    in $\Sigma$} if it projects onto a new curve in the extended grid
  diagram; otheriwse it is called an \emph{old curve in $\Sigma$}.
  Sometimes, we drop the qualifier, and refer simply to \emph{new
    curves} and \emph{old curves}, when $\Sigma$ is to be understood
  from the context.}
\end{defn}

Combining  the above construction with the theorem of Hilden and Montesinos, 
we obtain the following:
\begin{cor}\label{c:van}
Any closed, oriented 3--manifold admits an \adapted Heegaard diagram.
\qed
\end{cor}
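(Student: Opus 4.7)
The plan is to deduce the corollary directly by chaining together the three ingredients assembled in this section: the Hilden--Montesinos theorem, the existence of grid presentations for links in $S^3$, and Theorem~\ref{thm:Adapted}. Since all the substantive geometric work has already been carried out in the proof of Theorem~\ref{thm:Adapted}, the role of the corollary is simply to verify that the input data required by that theorem can always be produced.

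First I would invoke the Hilden--Montesinos theorem (\cite{Hilden, Monte}) to realize the given closed, oriented 3--manifold $Y$ as a simple 3--fold branched cover $\pi\colon Y\longrightarrow S^3$, with some link $R_Y\subset S^3$ as branch set. Next, I would apply the classical theorem (\cite{BirmanMenasco, Brunn, Cromwell}) that every link in $S^3$ admits a grid presentation, and fix such a grid diagram $(T^2,\as,\bs,\Os,\Xs)$ for $R_Y$. I would then extend the grid diagram by adjoining, for each annular region $A_{i,i+1}\subset T^2$ between consecutive ${\mathfrak a}$--curves, a new curve ${\mathfrak a}_{i+\frac{1}{2}}$ separating the two branch points it contains, and analogously for the $\bs$--direction; here one has a binary choice of which side of the new curve the $X$ (or $O$) lies on, but either choice is admissible. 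Finally, I would pick a basis $A,B\in H_1(T^2;\Z)$ representing the ${\mathfrak a}$-- and ${\mathfrak b}$--curves, so as to fix the coherent orientations and the induced cyclic orderings that drive the deletion conventions (1) and (2) in the construction.

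With all these choices in place, Theorem~\ref{thm:Adapted} applies directly and yields a multi-pointed Heegaard diagram $(\Sigma,\alphas,\betas,\ws)$ for $Y$ with one basepoint placed in each octagonal elementary domain. By the very definition of an \adapted Heegaard diagram, this diagram is adapted to the chosen data $(\pi,\as,\bs,\Os,\Xs,A,B)$, which completes the proof.

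There is essentially no obstacle here: the only subtle point is that the existence of the basepoint pattern and the pair-of-pants decomposition of $\Sigma$ needed for a valid multi-pointed Heegaard diagram is exactly the content of Theorem~\ref{thm:Adapted}, so nothing remains to check. The corollary should therefore be stated and proved as a one-line consequence (``combining Hilden--Montesinos with Theorem~\ref{thm:Adapted}''), rather than requiring any additional argument.
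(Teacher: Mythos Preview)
Your proposal is correct and matches the paper's approach exactly: the paper states the corollary immediately after the sentence ``Combining the above construction with the theorem of Hilden and Montesinos, we obtain the following'' and marks it with a \qed, treating it as an immediate consequence of Theorem~\ref{thm:Adapted} together with Hilden--Montesinos and the existence of grid presentations. Your write-up simply makes explicit the chain of choices (branched cover, grid diagram, extension, basis) that the paper leaves implicit.
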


The next proposition collects the most basic properties of \adapted Heegaard
diagrams. Further, more delicate properties of these diagrams (relevant to our
subsequent discussions) will be discussed afterwards.

\begin{prop}\label{p:Properties}
An adapted Heegaard diagram for $Y$ satisfies the following
properties:
\begin{enumerate}
\item Every elementary domain in the Heegaard diagram is either a
  rectangle, which does not contain a basepoint, or it is an octagon,
  in which case it does contain a unique basepoint.
\item Both the $\alpha$-- and $\beta$--curves give pair of pants
  decompositions of $\Sigma$; and each pair of pants contains a unique
  elementary domain which is an octagon (and all others are
  rectangles).
\item Any two $\alpha$-- and $\beta$--curves meet either exactly once
  (this is guaranteed if at least one is an old curve), or exactly
  twice (which happens only when both are new curves), and in this
  latter case the two intersections come with equal signs.
\item \label{property:ProjectGenerator} If $\x$ is a generator of
  $\CFm (\Sigma , \alpha , \beta )$, then any curve ${\mathfrak
  a}\in\as\subset T^2$ meets the projection $\pi (\x)$ of $\x$ in one
  or two points (counted with multiplicity). In the former case
  ${\mathfrak a}$ is a new curve while in latter case it is an old
  curve in the extended Heegaard diagram. Similar statement applies to
  the ${\mathfrak {b}}$--curves.
\item \label{property:Inverses} Over every intersection point of some
  ${\mathfrak a}_i\in\as$ and ${\mathfrak b}_j\in \betas$ in the
  extended grid diagram there is either one or two intersection
  points of some $\alpha_k$ and $\beta_\ell$ in the \adapted Heegaard
  diagram. 
\item The Heegaard diagram is admissible.
\end{enumerate}
\end{prop}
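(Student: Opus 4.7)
The plan is to establish the six properties in sequence, each using the explicit local structure of the triple branched cover $\pi\colon\Sigma\to T^2$ over the elementary regions of the extended grid diagram.

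For (1) and (2), the key observation is that in the extended grid diagram every elementary domain downstairs is a rectangle, and the new curves $\mathfrak{a}_{i+\frac12}$ and their $\mathfrak{b}$--analogues are introduced precisely so that each such rectangle contains at most one branch point. A rectangle with no branch point lifts to three disjoint rectangles (one per sheet), while a rectangle with a branch point lifts to one octagon (the 2--fold cover of the rectangle branched at its interior point, whose four edges each double--cover) together with one rectangle on the unramified third sheet; the basepoints, placed in the octagons, then satisfy (1). For (2), each annulus $A_{i,i+1}\subset T^2$ has preimage consisting of one annular component and one 4--punctured sphere component, and the kept new curve $\alpha_{i+\frac12}$ lies in the latter and cuts it into two pairs of pants. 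The right--endcircle convention ensures that exactly the correct two lifts of each old $\mathfrak{a}_i$ are kept to glue these pants into a pair of pants decomposition of $\Sigma$, with each pair of pants containing exactly one octagon; the $\beta$--analogue is identical.

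Property (3) is the combinatorial heart of the proposition and requires a case analysis on whether each curve is new or old. A transverse intersection point $p\in\mathfrak{a}\cap\mathfrak{b}$ downstairs lies in an unbranched region, so it has three preimages in $\Sigma$, one per sheet. When both curves are new, each kept lift double--covers its downstairs version, and exactly two of the three preimages of $p$ lie on both kept lifts; the two intersections carry equal signs because the deck involution (a transposition of two sheets) preserves each kept lift and acts orientation--preservingly on $T^2$. When at least one curve is old, sheet--counting combined with the right--endcircle rule yields exactly one intersection of the kept lifts.

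Properties (4), (5), and (6) then follow. For (4), each $\alpha$--curve in the \adapted Heegaard diagram carries exactly one coordinate of any generator $\x$, so an old $\mathfrak{a}_i\in\as$, having two kept lifts, meets $\pi(\x)$ in two points (with multiplicity), while a new $\mathfrak{a}_{i+\frac12}$, having one doubly--covering kept lift, meets $\pi(\x)$ in a single point. Property (5) is read off directly from the case analysis in (3). For admissibility, I would use that the base grid diagram of $S^3$ admits no non--trivial periodic domain, so any periodic two--chain on $\Sigma$ must project with signed multiplicities to zero under $\pi_*$; combining this with the $\Z/3$ sheet structure and the placement of basepoints in the octagons then forces any non--trivial periodic domain to contain both positive and negative local multiplicities. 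The main obstacle is (3): tracking the monodromy of the triple cover across transitions between the annular and 4--punctured sphere components, and applying the right--endcircle convention consistently, demands careful bookkeeping, but once (3) is in hand the remaining properties unfold quickly.
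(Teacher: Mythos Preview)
Your treatment of (1), (2), (4), and (5) is essentially what the paper has in mind, though the paper is much terser: it simply asserts that properties (1)--(5) ``obviously follow from the definition of the Heegaard diagram.''  Your explicit local picture of the branched cover over each elementary rectangle is a correct way to flesh this out.

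There is, however, a genuine slip in your treatment of (3).  You claim that when both curves are new, ``exactly two of the three preimages of $p$ lie on both kept lifts,'' i.e.\ that two new curves always meet twice.  This is stronger than what the proposition asserts, and it is not true in general.  The double-covering lift $\alpha_{i+\frac12}$ passes through the two preimages of $p$ lying in the $4$-punctured sphere component of $\pi^{-1}(A_{i,i+1})$, and $\beta_{j+\frac12}$ passes through the two preimages lying in the $4$-punctured sphere component of $\pi^{-1}(B_{j,j+1})$; these two pairs need not coincide (they agree precisely when the transposition monodromies for the two annuli happen to be equal at $p$, which cannot hold globally once the cover is connected).  The proposition only claims that the number of intersections is one \emph{or} two, with two occurring only in the new--new case.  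Relatedly, your invocation of a ``deck involution'' is imprecise: a simple $3$-fold branched cover is not Galois, so there is no global deck transformation.  The equal-sign statement is better argued by observing that both intersection points project via the orientation-preserving local diffeomorphism $\pi$ to the same transverse intersection downstairs.  Your one-line argument for the old-curve case (``sheet-counting combined with the right-endcircle rule'') is also too vague to stand as a proof.

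Your approach to (6) diverges from the paper's and has a real gap.  The paper simply notes that property (1) makes the diagram \emph{nice} in the sense of Sarkar--Wang, and then invokes the general fact (from \cite{LMW}) that nice diagrams are admissible.  Your projection argument is problematic: the pushforward of a periodic domain on $\Sigma$ need not have vanishing multiplicity at the $O$'s and $X$'s in $T^2$ (the single-sheet rectangles over branch-point squares carry no basepoint, so their multiplicities survive in the projection), so it is not obviously a periodic domain for the grid diagram of $S^3$.  Even granting a trivial projection, your final step (``combining this with the $\Z/3$ sheet structure\ldots'') is not an argument.  The nice-implies-admissible route is both shorter and complete.
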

\begin{proof}
 The first five properties obviously follow from the definition of the
 Heegaard diagram. Notice that by Property (1) the Heegaard diagram we
 defined is nice in the sense of \cite{SW}.  Property (6) then follows
 from~\cite[Corollary~3.2]{LMW}.
\end{proof}

Next we discuss several further properties of this Heegaard diagram,
which will be used in our arguments. In the proofs we will heavily use
the extra structure an \adapted Heegaard diagram admits, i.e.  the
extra restrictions imposed by the existence of the projection map from
$(\Sigma , \alphas , \betas )$ to the extended grid diagram.

\begin{lem}
  \label{lem:ThereIsABasepoint}
  Let $a\subset \alpha_i$ be an arc in a new curve $\subset \Sigma$
  with the property that $\pi$ restricted to $a$ is not injective.
  Then, there is an elementary domain on either side of $a$ which
  contains a baspoint.
\end{lem}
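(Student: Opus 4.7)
The plan is to exploit the projection $\pi$ to locate basepoints adjacent to $a$. Let $\mathfrak{a}\subset T^2$ denote the new curve of the extended grid diagram that $\alpha_i$ double-covers. The first step is to show that $\pi(a)=\mathfrak{a}$. Since $\pi|_{\alpha_i}$ is a connected two-fold cover of circles, two distinct points of $\alpha_i$ having the same image under $\pi$ are interchanged by the free involution generating the deck group; so if $\pi|_a$ fails to be injective then $a$ contains a pair of such antipodal points, which forces $a$ to have length at least half of $\alpha_i$, and hence $\pi(a)=\mathfrak{a}$.

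Next I would use the structure of adapted diagrams to pinpoint the octagons bordering $\alpha_i$. The curve $\mathfrak{a}=\mathfrak{a}_{j+\frac{1}{2}}$ sits in the annulus $A_{j,j+1}\subset T^2$ and separates its two branch points, one in $\Os$ and one in $\Xs$. Each branch point lies in a unique rectangle of the extended grid diagram, and each such distinguished rectangle has exactly one edge on $\mathfrak{a}$; call these edges $e_O$ and $e_X$, and the corresponding rectangles the $O$- and $X$-rectangles. By Proposition~\ref{p:Properties}(1), the branched sheets over these rectangles are precisely the two octagons contained in $\pi^{-1}(A_{j,j+1})$, each equipped with one of the basepoints in $\ws$. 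Moreover, the two $\alpha_i$-boundary arcs of the $X$-octagon are exactly the two components of $\pi^{-1}(e_X)\cap\alpha_i$, and analogously for $O$.

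Combining these observations, since $\pi(a)=\mathfrak{a}$ contains both $e_X$ and $e_O$, the arc $a$ contains an open sub-arc lying in one of the two $\alpha_i$-boundary arcs of the $X$-octagon and an open sub-arc lying in one of the two $\alpha_i$-boundary arcs of the $O$-octagon, so both octagons are elementary domains adjacent to $a$. The one subtlety is to verify that these two octagons sit on \emph{opposite} sides of $\alpha_i$ in $\Sigma$. Since $\pi$ is unramified in a neighborhood of $\alpha_i$ (the branch points lie in the interiors of grid rectangles, away from the $\alpha$-curves), an annular neighborhood of $\alpha_i$ in $\Sigma$ is the connected double cover of an annular neighborhood of $\mathfrak{a}$; consequently each side of $\alpha_i$ double-covers a specific side of $\mathfrak{a}$, and since the $X$- and $O$-rectangles sit on opposite sides of $\mathfrak{a}$, the corresponding octagons sit on opposite sides of $\alpha_i$. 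This completes the plan; the main obstacle is keeping track of the sidedness of the double cover, after which the topological content is straightforward.
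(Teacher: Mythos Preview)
Your proposal is correct and follows essentially the same approach as the paper: both argue that $\pi(a)$ is the full circle $\mathfrak{a}$, then locate the two branch-point rectangles on either side of $\mathfrak{a}$ and lift them to octagons adjacent to $a$. Your version is more careful than the paper's about the sidedness issue (verifying that the two octagons lie on opposite sides of $\alpha_i$ via the unramified double cover near $\alpha_i$), which the paper simply asserts.
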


\begin{proof}
  By the assumption, the projection $\pi (a)$ is a full circle in the
  torus, i.e., the homology class $[\pi (a)]$ is equal to $\pm A $ (or
  $\pm B$). Therefore $\pi (a)$ has a rectangle on either of its sides
  which contains a basepoint. These rectangles lift to octagons
  containing basepoints as stated.
\end{proof}

\begin{lem}
  \label{lem:TwoNewCurves}
  Let $\phi\in\pi_2(\x,\y)$ be a non-negative homology class, and let
  $D=D(\phi )$ be its corresponding domain.  Suppose that $a_1,
  a_2\subset \partial D$ are two arcs in $\partial D$ which are
  contained in new curves $\alpha_1$ and $\alpha_2\subset \Sigma$
  respectively, and which have the property that the projection map
  $\pi$ restricted to both $a_1$ and $a_2$ is not injective.  Then,
  $n_w(\phi)\geq 2$.
\end{lem}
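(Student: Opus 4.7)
The plan is to apply Lemma~\ref{lem:ThereIsABasepoint} to each of the two arcs separately and then argue that the two basepoints produced are distinct.

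First, for each $j\in\{1,2\}$ I would invoke Lemma~\ref{lem:ThereIsABasepoint} on the arc $a_j\subset\alpha_j$: it produces, on each of the two sides of $a_j$ in $\Sigma$, an elementary octagonal domain containing a basepoint. Since $a_j\subset\partial D$, the local multiplicities of $D$ on the two sides of $a_j$ differ, and because $D$ is non-negative, at least one of them is $\geq 1$. The octagon lying on that positive-multiplicity side contains some basepoint $w_j^{*}$, and by construction $n_{w_j^{*}}(\phi)\geq 1$.

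The key step is then to argue that $w_1^{*}\neq w_2^{*}$. By the construction of an \adapted Heegaard diagram, each new curve ${\mathfrak a}_{i+\frac{1}{2}}$ in the extended grid diagram is double-covered by exactly one component of its preimage under $\pi$, and that component is the one retained as the corresponding new $\alpha$-curve in $\Sigma$. Thus distinct new $\alpha$-curves $\alpha_1\neq\alpha_2\subset\Sigma$ project to distinct new curves ${\mathfrak a}_{i_1+\frac{1}{2}}\neq{\mathfrak a}_{i_2+\frac{1}{2}}$, which lie in different annular regions $A_{i_1,i_1+1}$ and $A_{i_2,i_2+1}$ of $T^2$. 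Every octagon adjacent to $\alpha_j$ in $\Sigma$ sits inside the $4$-punctured-sphere component of $\pi^{-1}(A_{i_j,i_j+1})$, so octagons adjacent to $\alpha_1$ are disjoint from octagons adjacent to $\alpha_2$; in particular $w_1^{*}\neq w_2^{*}$, and hence $n_w(\phi)\geq n_{w_1^{*}}(\phi)+n_{w_2^{*}}(\phi)\geq 2$.

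The main potential obstacle is the (implicitly excluded) degenerate case $\alpha_1=\alpha_2$. If the two arcs overlap along this common curve, then the local multiplicity of $D$ next to the overlap is $\geq 2$, and a single adjacent octagon already contributes $n_w(\phi)\geq 2$; if they are disjoint, then each would have to surject onto the same downstairs curve ${\mathfrak a}_{i+\frac{1}{2}}$ via the degree-$2$ cover $\pi|_{\alpha_1}$, and two disjoint arcs of $\alpha_1$ cannot both do this, since their combined length would exceed that of $\alpha_1$ itself. Hence this case does not arise, and the argument of the preceding paragraph suffices.
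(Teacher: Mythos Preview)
Your proof is correct and follows essentially the same route as the paper's: apply Lemma~\ref{lem:ThereIsABasepoint} to each arc to locate an octagonal basepoint-carrying domain with positive multiplicity in $D$, and then argue these two octagons are distinct. The paper phrases the distinctness step slightly differently---it observes that no elementary domain in the extended grid has boundary arcs on two different new $\as$-curves---but this is equivalent to your observation that the octagons adjacent to $\alpha_1$ and $\alpha_2$ lie over disjoint annular strips $A_{i_1,i_1+1}$ and $A_{i_2,i_2+1}$. Your added treatment of the degenerate case $\alpha_1=\alpha_2$ is not in the paper (the notation and all later applications implicitly take the curves to be distinct), but your handling of it is sound.
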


\begin{proof}
  By Lemma~\ref{lem:ThereIsABasepoint} there are basepoints $p_1$ and
  $p_2$ in $D$ adjacent to both $a_1$ and $a_2$, in the sense that
  there are elementary domains $D_1$ and $D_2$ appearing with non-zero
  multiplicity in $D$, and $D_i$ contains $p_i$ and also an arc in
  $a_i$ on its boundary.  Since in the extended grid there is no
  elementary domain which has two arcs on its boundary from different
  new $\alpha$--curves, the same holds in the \adapted Heegaard
  diagram of $Y$. This shows that $D_1\neq D_2$, implying that $p_1$
  and $p_2$ are distinct, concluding the proof.
\end{proof}

There is a principle which is useful for excluding the existence of
certain homology classes in an \adapted Heegaard diagram, with
specified local multiplicities at their corners. Since similar
arguments will be repeatedly used in our latter discussions, we
illustrate this principle in a particular case.

\begin{lem}\label{lem:Principle}
  There is no homology class of Whitney disks $\phi\in\pi_2(\x,\y)$
  with the following properties:
  \begin{itemize}
        \item The homology class $\phi$ is non-negative,
        \item $n_w(\phi)\leq 1$,
    \item 
      there are exactly two $\x$-- and two $\y$--coordinates ($x_1,
      x_2$ and $y_1, y_2$) where the local multiplicity is non-zero,
    \item the local multiplicities at these four points are
      $\{\frac{1}{4}, \frac{1}{4}, \frac{3}{4}, \frac{3}{4}\}$ in some
      order,
    \item the points $\{x_1,y_1,x_2,y_2\}$ project to four distinct
      points on the torus, and
    \item all four points $x_1,y_1,x_2,y_2$ lie on a single component
      in the boundary of $D$.
  \end{itemize}
\end{lem}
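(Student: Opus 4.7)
The plan is to combine the local corner structure with the projection $\pi\colon\Sigma\to T^2$ to obtain a contradiction.  First, unpack the local picture at each corner and record the numerical constraints.  Since each local multiplicity is $\tfrac14$ or $\tfrac34$ and the four adjacent elementary multiplicities are non-negative integers averaging to that value, the four elementary domains adjacent to a $\tfrac14$-corner have multiplicities $(1,0,0,0)$ up to rotation, and at a $\tfrac34$-corner they are $(1,1,1,0)$.  In either case exactly two arcs of $\partial D$ (one $\alpha$-arc and one $\beta$-arc) meet at the corner, bounding the unique filled quadrant (for $\tfrac14$) or the unique empty quadrant (for $\tfrac34$); so the boundary component containing all four corners is a closed loop of four arcs alternating $\alpha,\beta,\alpha,\beta$, with corners in some cyclic order $x_1,y_{i_1},x_2,y_{i_2}$.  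The point measure is $p(\phi)=\tfrac14+\tfrac14+\tfrac34+\tfrac34=2$, so Theorem~\ref{thm:lip1} gives $\mu(\phi)=e(\phi)+2$; by Proposition~\ref{p:Properties} each elementary domain is either a rectangle or an octagon containing a unique basepoint, hence $e(\phi)=-n_w(\phi)\in\{0,-1\}$ under the hypothesis $n_w(\phi)\le 1$.

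Next, project the boundary cycle to the torus $T^2$.  The four corners project to four distinct intersections of $\mathfrak{a}$- and $\mathfrak{b}$-curves, and the four arcs of the single boundary component project to arcs on these curves that close up to a loop.  Lemma~\ref{lem:ThereIsABasepoint} and Lemma~\ref{lem:TwoNewCurves} imply that any arc on a new curve whose projection is not injective forces an adjacent basepoint, and two such arcs simultaneously would force $n_w(\phi)\ge 2$; thus under the hypothesis at most one arc can fail to project injectively.  Meanwhile any arc on an old curve must project injectively, for otherwise the old $\alpha$- or $\beta$-curve (which maps homeomorphically onto its image) would force the arc's endpoints to coincide, contradicting the distinctness of the four corner projections.

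The main obstacle, and the heart of the argument, is the case analysis showing that no arrangement of these (essentially short) $\mathfrak{a}$- and $\mathfrak{b}$-arcs can close into a loop through four distinct torus intersections while simultaneously matching the convex/concave corner types prescribed by the multiplicities $\{\tfrac14,\tfrac14,\tfrac34,\tfrac34\}$ and keeping $n_w(\phi)\le 1$.  In each case one expects either a forced coincidence among the four projected corners (contradicting distinctness) or the appearance of a second wrapping arc (contradicting $n_w(\phi)\le 1$), by a systematic examination of which $\mathfrak{a}$- and $\mathfrak{b}$-curves the four arcs lie on, using Property~(\ref{property:Inverses}) of Proposition~\ref{p:Properties} to bound the number of lifts of each torus intersection.
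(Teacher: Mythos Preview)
Your setup is on the right track, but there are two genuine gaps.

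First, your claim that at a $\tfrac34$-corner the four adjacent multiplicities must be $(1,1,1,0)$ is false: the distribution $(0,0,1,2)$ is equally consistent with non-negativity and with the domain being associated to a Whitney disk. (Only $(0,0,0,3)$ is excluded.) The paper treats this case separately. When a $\tfrac34$-corner has pattern $(0,0,1,2)$, one of the boundary arcs emanating from it carries multiplicity $2$, and this multiplicity-$2$ arc must terminate at the other $\tfrac34$-corner (since no $\tfrac14$-corner can absorb it). One then argues that the two $\tfrac34$-corners share an edge, and by lifting short arcs from the torus and tracking the region of multiplicity $2$, one forces two distinct new-curve arcs on which $\pi$ is not injective, invoking Lemma~\ref{lem:TwoNewCurves} to contradict $n_w(\phi)\le 1$. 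Your proposal never addresses this case.

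Second, even in the $(1,1,1,0)$ case your ``heart of the argument'' is not an argument---it is a statement that a case analysis should work, without the organizing idea that makes it work. The paper's key observation is homological: one builds a canonical short closed curve $\gamma$ in $T^2$ through the four projected corners, turning $90^\circ$ left at $\tfrac14$-corners and right at $\tfrac34$-corners, and checks directly that $[\gamma]\in\{A,B,A+B\}\subset H_1(T^2)$, never zero. The actual projection $\pi(\partial D)$ differs from $\gamma$ only by elongating arcs by full loops around $T^2$, and the turning directions force these added loops to contribute only non-negative multiples of $A$ (resp.\ $B$) on the relevant sides. Hence $\pi(\partial D)$ can never be made null-homologous, contradicting the fact that $\partial D$ bounds in $\Sigma$ and hence its projection bounds in $T^2$. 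Your sketch instead anticipates contradictions via ``forced coincidences'' or ``second wrapping arcs,'' but neither mechanism is what actually rules out the $(1,1,1,0)$ configurations; the obstruction is purely homological and does not use $n_w(\phi)\le 1$ at all in this sub-case.
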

\begin{proof}
  The local multiplicites in the four quadrants around a point of
  local multiplicity $\frac{3}{4}$ can distribute in two fundamentally
  different ways. They can be either $\{0,1,1,1\}$, or they can be
  $\{0,0,1,2\}$. (The third combinatorial possibility, $\{ 0,0,0,3\}$,
  cannot occur since $\phi $ is a homology class of Whitney disks.)
  We exclude first the possibility that all local multiplicities
  around each corner point are $\leq 1$, i.e. the case where the four
  multiplicities distribute as $\{0,1,1,1\}$ around both points with
  multiplicity $\frac{3}{4}$.

  We construct a closed, embedded, oriented path $\gamma$ in the torus
  which goes through the projection of the four given points,
  consisting of arcs among the projections of the $\alpha_i$ and
  $\beta_j$, and turning $90^\circ$ left or right depending on whether
  the corner point required to have local multiplicity $\frac{1}{4}$
  or $\frac{3}{4}$, respectively. There are two combinatorially
  different cases for $\gamma$, according to the cyclic order of the
  different local multiplicities; these possibilities are illustrated
  in Figure~\ref{fig:CantFindNullHomology}. (The grid torus is given
  by the usual identification of opposite sides of the rectangles
  drawn.)
\begin{figure}[ht]
\begin{center}
\setlength{\unitlength}{1mm}
{\input{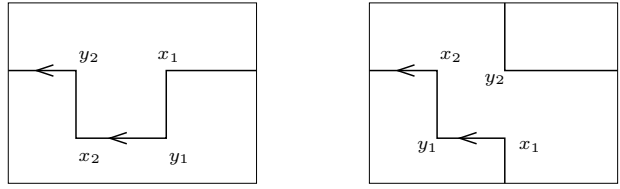}}
\end{center}
\caption{Choices of initial curve $\gamma$.}
\label{fig:CantFindNullHomology}
\end{figure}

Note that $\gamma$ does not {\em a priori} have to agree with the
projection of the boundary of our domain $D$. However, it is easy to
see that the projection of the boundary of $D$ is necessarily gotten
from $\gamma$ by the following procedure: given any of the four
turning points of $\gamma$, if we approach that corner of $\gamma$
along some arc, then we can elongate that arc so that it goes around
the torus (with the same orientation) more times.

As usual, let $A$ and $B$ denote two generators of $H_1(T^2; \Z )$,
where $A$ is the homology class of the projection of an old curve
$\alpha_i$ (with suitable orientation) while $B$ is the homology class
of the projection of an old curve $\beta_j$.  The possible choices of
$\gamma$ illustrated in Figure~\ref{fig:CantFindNullHomology} show
that $\gamma$ represents either the homology class $A$, $B$ (note that
we are free to rotate the picture $90^\circ$), or $A+B$.

Consider the case where the homology class of $\gamma$ is $A$. In this case,
our allowed modification can add on some number of copies of $\pm B$, or
alternatively some (non-negative) number of copies of $A$ (but no copies of
$-A$).  Thus, this modification can never end up giving a null-homologous
curve; on the other hand, the boundary of our domain must be null-homologous.

Similarly, in the case where the homology class of our curve $\gamma$ is $A+B$,
we can add on more copies of $A$ and $B$ (but not $-A$ or $-B$). Once
again, we can never do this to achieve a curve which is null-homologous.

We have just excluded the possibility that the local multiplicities
around both of the $\frac{3}{4}$ points are all $\leq 1$
(and indeed, for that part of the argument, we have not
yet used the hypothesis that $n_w(\phi)\leq 1$). Suppose now that at least one
of these corner points has local multiplicities $\{0,0,1,2\}$.  It
follows readily that the other $\frac{3}{4}$ point has the same
distribution of local multiplicities: the arc separating the region
with multiplicity $2$ from the adjacent region with multiplicity $0$
appears with multiplicity $2$ in the boundary, and this phenomenon
does not occur for any other corner point whose local multiplicities
are all $\leq 1$ (but non-negative). 
Thus, it follows in fact that the two $\frac{3}{4}$ corners
share an edge. That edge might be of type $\alpha$ or $\beta$:
the two cases are the same (with a little change of notation), so we
assume that latter.

In fact, we label coordinates as in Figure~\ref{fig:Case0012}.
The torus is divided into four regions, labelled $W$, $X$, $Y$, and
$Z$. We label the adjoining curves in the extended Heegaard diagram
$a_1$, $a_2$, $b_1$, $b_2$ (without regard to the earlier
half-integral labeling).  The points $x_1$ and $y_1$ have local
multiplicity $\frac{3}{4}$, which distribute as $\{0,0,1,2\}$. In
fact, the region around $x_1$ and $y_1$ with mulipliticty $2$ project
to $W$, while the region with multiplicity $1$ around $x_1$ and $y_1$
project to $X$.

Let $a\subset \alpha_1$ be the arc from $x_1$ to $y_2$. We claim that
this surjects onto ${\mathfrak a}_1$ in $T^2$. To see this, let $\xi$
be the arc which is the top edge of $W$: it connects $\pi(x_1)$ to
$\pi(y_2)$.  Let ${\widetilde \xi}$ be the lift of $\xi$ to $\Sigma$,
so that its initial point is $x_1$, and we let its final point be
denoted $y$.  The final point $y$ is adjacent to a region with
multiplicity $2$ (the region projecting to $W$). It follows that $y$
cannot be a corner point (the only corner points with multiplicity $2$
in an adjacent region are $x_1$ and $y_1$), and hence $a$ must be an
arc in a new curve. Thus, the arc in the boundary of $D(\phi)$
connecting $x_1$ to $y_2$ must project to an arc in $T^2$ which covers
the projection of $x_1$ twice. The same argument works for the arc connecting $y_1$ and
$x_2$. In conclusion we found two arcs in $\partial D(\phi )$ which
are contained by new $\alpha$--curves and on which $\pi $ is not
injective. The application of Lemma~\ref{lem:TwoNewCurves} now
provides the desired contradiction with the fact that $n _w(\phi )\leq
1$.
\end{proof}
\begin{figure}[ht]
\begin{center}
\setlength{\unitlength}{1mm}
{\input{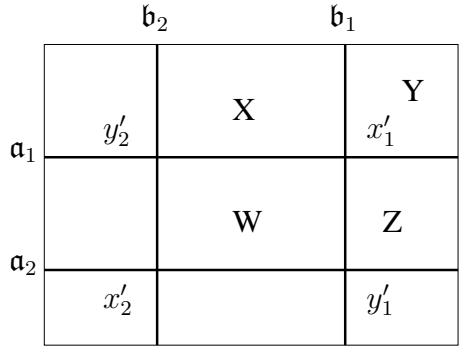}}
\end{center}
\caption{{\bf Notation for the end of Lemma~\ref{lem:Principle}}.
Here, we let $x_i'$ be the projection of $x_i$ and $y_i'$ be the
projection of $y_i$.}
\label{fig:Case0012}
\end{figure}

\section{Non-negative domains with Maslov index one}
\label{sec:fifth}

We can now formulate a more precise version of Theorem~\ref{t:main},
showing that $\CFtwo(Y)$ can be determined combinatorially using an
adapted Heegaard diagram for $Y$. The proof of the following theorem
will rely on results of this and the subsequent sections and will be
given at the end of Section~\ref{sec:CalcContributions}.

\begin{thm}\label{t:contribution}
  Suppose that $Y$ is given by an \adapted Heegaard diagram.  Then for
  any non-negative homology class $\phi$ with Maslov index $\mu (\phi
  )=1$ and $n_w(\phi) =\sum n_{w_i}(\phi) \leq 1$, and for any choice
  of almost-complex structure $J$, the number $c(\phi,J)
  =\#\UnparModFlow(\phi)$ is equal to 1 modulo 2.
\end{thm}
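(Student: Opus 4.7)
\textbf{Proof plan for Theorem~\ref{t:contribution}.}

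The plan is to split the argument according to whether $n_w(\phi)=0$ or $n_w(\phi)=1$, and then, within each case, to use the classification of Maslov index one non-negative domains (from the forthcoming Section~\ref{sec:fifth}) to reduce the count $\#\UnparModFlow(\phi)$ to a short list of model computations. For $n_w(\phi)=0$ the argument is almost immediate: by Property~(1) of Proposition~\ref{p:Properties} an adapted Heegaard diagram is nice in the sense of Sarkar--Wang, and any Maslov index one non-negative domain disjoint from $\ws$ must then be an embedded bigon or rectangle, whose contribution is $1$ modulo $2$ by the standard holomorphic-disk count in a nice diagram. Thus the core of the theorem is the case $n_w(\phi)=1$, which is not covered by \cite{SW}.

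For $n_w(\phi)=1$, the domain $D(\phi)$ contains a single octagon with local multiplicity one (since each octagon is the unique elementary domain of its pair of pants containing a basepoint). Using Theorem~\ref{thm:lip1}, the constraints $e(\text{octagon})=-1$ and $\mu(\phi)=1$ force $e(\phi)+p(\phi)=1$, and since any additional rectangular elementary domains contribute zero Euler measure, the point measure $p(\phi)$ is pinned down. I would then invoke the Section~\ref{sec:fifth} classification to list the finitely many combinatorial types of Maslov index one, $n_w\leq 1$ domains. The projection to the extended grid, together with Lemma~\ref{lem:ThereIsABasepoint}, Lemma~\ref{lem:TwoNewCurves}, and the principle illustrated in Lemma~\ref{lem:Principle}, will severely restrict how additional rectangles can be attached to the central octagon, limiting the types to a small explicit list.

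For each type on the list, I would compute $\#\UnparModFlow(\phi)$ as follows. For the types where $D(\phi)$ is embedded and $d=2p(\phi)$, Remark~\ref{r:sharp} applies: the formula \eqref{e:branch} gives $b=0$, so the Lipshitz map $u\colon S\to \Sigma$ is a local diffeomorphism onto its image, and the moduli space reduces to the count of biholomorphic maps from a fixed Riemann surface $S$ to an embedded polygon in $\Sigma$; an explicit degeneration or ``cutting the $\alpha$-arcs'' argument then identifies $\UnparModFlow(\phi)$ with the moduli space of a model bigon or rectangle, giving the count $1 \pmod 2$. For the remaining types, where some corner has local multiplicity $\geq\tfrac34$ and branch points are forced by \eqref{e:branch}, I would translate the problem via Lipshitz's cylindrical formulation into the enumeration of branched covers $u\colon S\to \Sigma$ with prescribed boundary conditions, and then use the projection $\pi$ to $T^2$ (together with Theorem~\ref{thm:lip3}) to reduce the branched cover count to a model on the torus whose mod~$2$ count is again~$1$.

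The main obstacle I expect is the case where $D(\phi)$ has an interior multiplicity $2$ and the source $S$ has a nontrivial branch point: here one cannot simply appeal to an embedded model, and one must show that the count of holomorphic representatives is independent of the choice of $J$ and equal to $1 \pmod 2$. I would handle this by a neck-stretching/degeneration argument along a suitable simple closed curve on $\Sigma$ that is transverse to $\pi^{-1}(\mathbf{a})\cup\pi^{-1}(\mathbf{b})$, breaking $\phi$ into two Maslov index $\tfrac12$ pieces whose counts are controlled by the already-treated embedded models; the required rigidity of the degeneration is exactly what the projection structure of the adapted Heegaard diagram, and in particular Lemma~\ref{lem:TwoNewCurves} applied to $n_w(\phi)\leq 1$, makes available.
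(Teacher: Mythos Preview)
Your broad strategy --- classify the non-negative Maslov index one domains with $n_w \leq 1$, then compute $c(\phi)$ for each type on the list --- matches the paper's, but you miss two structural points that make the paper's argument both cleaner and complete.

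First, the paper handles $J$-independence globally and up front, not case by case. Proposition~\ref{prop:NoIndexZeros} shows that in an adapted diagram there is \emph{no} non-constant, non-negative homology class $\psi$ with $\mu(\psi)=0$ and $n_w(\psi)\leq 1$. By the standard Gromov compactness argument (varying $J$ in a one-parameter family and looking at the ends of the parameterized moduli space), this immediately gives $c(\phi,J_1)=c(\phi,J_2)$ for \emph{every} Maslov index one $\phi$ with $n_w(\phi)\leq 1$, with no further analysis. You raise $J$-independence only in your last paragraph and only for the hypothetical multiplicity-two case; but even for the embedded annulus (which genuinely occurs, has $d=3\neq 2p(\phi)$, and hence a boundary branch point by~\eqref{e:branch}), $J$-independence is not automatic from a local model --- the paper explicitly notes that the combinatorics of the adapted diagram are what make it hold.

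Second, the classification is much sharper than you anticipate. Proposition~\ref{prop:OnlyIndexOnes} together with Propositions~\ref{prop:Rectangle}, \ref{prop:Annulus}, and \ref{prop:Octagon} (assembled as Theorem~\ref{thm:ClassifyDomains}) show that \emph{every} such domain is embedded: the corner multiplicities lie in $\{0,\frac{1}{4}\}$ for the rectangle and octagon, with one additional corner of multiplicity $\frac{1}{2}$ for the annulus. There are no corners of multiplicity $\geq\frac{3}{4}$ and no interior regions of multiplicity $2$; all of those possibilities are ruled out already in Proposition~\ref{prop:OnlyIndexOnes} by the Euler/point-measure bookkeeping and the projection lemmas. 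Your ``main obstacle'' and the proposed neck-stretching into ``Maslov index $\frac{1}{2}$ pieces'' (which is not a standard decomposition in this setting) address a phantom case.

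Once $J$-independence and embeddedness are in hand, the paper computes $c(\phi)$ not by direct moduli-space analysis but by a transplant argument: each of the three embedded model domains can be realized as a domain in a small Heegaard diagram for $S^3$ with exactly three generators, and the constraint $\HFa(S^3)=\Field$ forces $c(\phi)\equiv 1\pmod 2$ (cf.\ \cite[Theorem~6.1]{KT} for the octagon and \cite[Lemma~3.11]{Cube} in general). Since the domain is embedded in the adapted diagram for $Y$ and $c(\phi)$ is $J$-independent, the count transfers. This replaces your proposed branched-cover enumeration and degeneration arguments with a single homological constraint.
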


The proof will start with the analysis of the possible non-negative
domains with $n_w(\phi)\leq 1$ and with Maslov index zero. We will
show that in an \adapted Heegaard diagram, if $\phi$ satisfies these
hypotheses, then it represents a constant map.  From this (as it is shown 
by Proposition~\ref{prop:NoIndexZeros}) it will follow that the
contribution $c(\phi,J)$ of any homology class of Maslov index one to
the differential is independent of the chosen almost--complex
structure $J$. Afterwards, in Proposition~\ref{prop:OnlyIndexOnes}, we
place strong topological restrictions on the possible homology classes
of non-negative Whitney disks with Maslov index one and $n_w(\phi)\leq
1$. Indeed, with a little more work in
Section~\ref{sec:CalcContributions}, we are able to classify such
domains, as stated in Theorem~\ref{thm:ClassifyDomains}.  We can then
verify that the contributions of these domains are equal to 1 (mod 2).
We begin with the promised analysis of Maslov index zero domains.

\begin{prop}
\label{prop:NoIndexZeros}
Suppose that $Y$ is given by an \adapted Heegaard diagram. Then for any
non-negative domain $\phi\in \pi_2(\x,\y )$ with Maslov index $\mu
(\phi)=1$ and $n_w(\phi)=\sum n_{w_i}(\phi) \leq 1$ the number
$c(\phi,J) =\#\UnparModFlow(\phi)$ is independent of the chosen
almost-complex structure $J$.
\end{prop}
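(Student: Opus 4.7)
My plan is to split the argument into two parts: a combinatorial classification of non-negative Maslov index zero classes with $n_w\leq 1$, and a standard cobordism argument that deduces $J$-invariance from it.

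The heart of the argument is the following classification claim: if $\psi\in\pi_2(\x',\y')$ is a non-negative homology class in an adapted Heegaard diagram with $\mu(\psi)=0$ and $n_w(\psi)\leq 1$, then $\psi$ is the trivial class (so $\x'=\y'$ and $D(\psi)=0$). By Proposition~\ref{p:Properties}(1), the only elementary regions with negative Euler measure are the octagons, each of which has Euler measure $-1$ and contains a unique basepoint, while rectangles have Euler measure zero. Hence $e(\psi)=-n_w(\psi)\in\{-1,0\}$. Combining Theorems~\ref{thm:lip1} and~\ref{thm:lip2} one then gets $p(\psi)=n_w(\psi)$ and $\Delta(\psi)=2n_w(\psi)$.

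If $n_w(\psi)=0$ then $e=p=\Delta=0$, so every coordinate of $\x'$ and $\y'$ lies in a region of multiplicity zero, making $\psi$ a non-negative periodic domain; admissibility (Proposition~\ref{p:Properties}(6)) then forces $\psi=0$. If $n_w(\psi)=1$ then $e(\psi)=-1$, $p(\psi)=1$, $\Delta(\psi)=2$, with exactly one octagon appearing with multiplicity $1$. This is the main obstacle: I would enumerate the possible distributions of local corner multiplicities summing to $p(\psi)=1$ (the relevant patterns being combinations of $\tfrac{1}{4}$, $\tfrac{1}{2}$, $\tfrac{3}{4}$, and $1$) and rule out each one by exploiting the projection $\pi$ to the grid torus. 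The key tools would be Lemmas~\ref{lem:ThereIsABasepoint} and~\ref{lem:TwoNewCurves}, together with the homological/projection argument schema of Lemma~\ref{lem:Principle}: since $\partial D(\psi)$ projects to a null-homologous $1$-cycle in $T^2$ while the presence of the octagon forces certain arcs in new $\alpha$- or $\beta$-curves to be traversed non-injectively, one obtains either a contradiction with $n_w(\psi)=1$ (via Lemma~\ref{lem:TwoNewCurves}) or an impossibility in homology on the grid torus.

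Granted the classification, the $J$-invariance follows by standard arguments. For a generic path $\{J_t\}_{t\in[0,1]}$, the parametrized moduli space $\bigcup_t \UnparModFlow_{J_t}(\phi)$ is a $1$-manifold whose boundary contributions at $t=0,1$ recover $c(\phi,J_0)$ and $c(\phi,J_1)$. All other boundary components come from broken flows $\phi=\phi_1*\phi_2$ with $\mu(\phi_1)+\mu(\phi_2)=1$ and both $\phi_i$ non-negative; the factor of Maslov index zero then satisfies the hypotheses of the classification (as $n_w(\phi_1)+n_w(\phi_2)=n_w(\phi)\leq 1$) and is therefore trivial, so no genuine breaking occurs. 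Sphere bubbles and boundary degenerations have codimension at least two for generic paths and can be avoided. Consequently $c(\phi,J_0)\equiv c(\phi,J_1)\pmod 2$, completing the proof.
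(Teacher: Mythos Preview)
Your proposal is correct and takes essentially the same approach as the paper: reduce $J$-independence via Gromov compactness to the claim that every non-negative index-zero class with $n_w\leq 1$ is trivial, and verify that claim using the Euler/point-measure formulas together with the projection lemmas. For the $n_w(\psi)=1$ step the paper organizes the enumeration through the parameter $d$ (the number of moving coordinates) and the formula $\chi(S)=d-\Delta(\psi)$ from Theorem~\ref{thm:lip3}, which forces $d\leq 2$ and pins down the boundary structure of the representing surface; this is precisely the structural input that makes the homology-on-the-torus obstruction ($d=1$) and Lemma~\ref{lem:TwoNewCurves} ($d=2$) apply directly, and it is what your sketch would need to complete the case analysis.
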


\begin{proof}
  The number $c(\phi,J)$ {\em a priori} might depend on the chosen
  almost-complex structure $J$. However, in that case, there must be a
  different, non-constant, non-negative homology class $\psi$ of
  Whitney disks with $\Mas(\psi)=0$ with $n_w(\psi)\leq n_w(\phi)$.
  This can be seen from Gromov's compactness theorem: suppose that
  $J_t$ is a generic one-parameter path of almost-complex structures
  connecting $J_1$ to $J_2$ (in fact, $J_1$ and $J_2$ are actually
  paths of almost-complex structures themselves, and hence $J_t$ is
  really a path of paths), and consider the parameterized moduli space
  consisting of pairs $(t,u)$ where $t\in [1,2]$ and $u\in
  \UnparModFlow_{J_t}(\phi)$. This moduli space has boundary at $t=1$,
  $t=2$ (counted by the difference $c(\phi,J_1)-c(\phi,J_2)$), and
  further ends at splittings parameterized by triples $t$, $u_1$,
  $u_2$, where
  \begin{itemize}
  \item $t\in (1,2)$,
  \item $u_1$ and $u_2$ are (non-constant)
    $J_t$-pseudo-holomorphic disks, and 
  \item $u_1$ and $u_2$ represent homology classes $\phi_1$ and
    $\phi_2$ of Whitney disks, so that the juxtaposition of $\phi_1$
    and $\phi_2$, $\phi_1*\phi_2$ represents $\phi$.
  \end{itemize}
  Moreover, by choosing our path generically, we can arrange that
  $\{\Mas(\phi_1),\Mas(\phi_2)\}=\{0,1\}$. Thus, if $c(\phi,J_1)\neq
  c(\phi,J_2)$, there must be an end corresponding to a decomposition
  of $\phi$. Our desired $\psi$ is then either $\phi_1$ or $\phi_2$
  (whichever has Maslov index equal to zero).
  
  We now analyze non-negative domains with Maslov index $0$ in our
  Heegaard diagram. We will show that such domain corresponds to a
  constant map, hence, according to the above said, we will verify
  that for each Maslov index one homology class of Whitney disks the
  quantity $c(\phi,J)$ is indepdent of $J$.

  To this end, recall that $\Mas(\psi ) = e(\psi )+p(\psi )$ and by
  our assumption $n_w(\psi ) \leq 1$. This implies that the domain of
  $\psi$ contains at most one octagon with multiplicity at most one,
  hence $e (\psi )\geq -1$.  Notice that in an \adapted Heegaard
  diagram every domain has integral Euler measure, hence $e(\psi)$ is
  also an integer.  If $\psi$ is non-constant, we have that $p (\psi
  )>0$.  Since $\Mas(\psi )=0 $ and $n_w(\psi )\leq 1$, it follows that
  $e(\psi )=-1$ and $p(\psi )=1$. Therefore a non-negative domain with
  $n_w(\psi )\leq 1$ and $\Mas (\psi )=0$ has $\Delta (\psi )=2$ and
  $d\leq 2$.
  
  In the case where $d=1$, the possible local multiplicities of $\x$
  and $\y$ are $(\frac{1}{4}, \frac{3}{4})$ and $(\frac{1}{2},
  \frac{1}{2})$. In both cases the projection of the image of the
  boundary represents a nontrivial homology class in $T^2$, leading to
  a contradiction as follows. In the first case, the distinct
  coordinates of $\x$ and $\y$ are the two intersection points of a
  pair of new $\alpha$-- and $\beta$--curves, hence the projection of
  the boundary represents $\pm A \pm B$ in $H_1(Y; \Z)$.  If the local
  multiplicities are given by $(\frac{1}{2},\frac{1}{2})$ then
  $\partial S$ maps to a complete $\alpha$-- or $\beta$--circle, hence
  the projection of the image of $\partial S$ represents $\pm A$ or
  $\pm 2A$ (or $\pm B, \pm 2B$) in the homology of the torus. In both
  cases, $\pi(u(\partial S))$ is not null-homologous, providing a
  contradiction to the possibility that $d=1$.
  
  In the case where $d=2$, the local multiplicities must be equal to
  $(\frac{1}{4}, \frac{1}{4}, \frac{1}{4}, \frac{1}{4})$ and the Euler
  characteristic of the surface $S$ (given as in
  Theorem~\ref{thm:lip3}) is $0$. Hence the domain must have two
  boundary components, each made out of an $\alpha$-- and a
  $\beta$--arc, intersecting each other twice.  Since the local
  multiplicities are all equal to $\frac{1}{4}$, the two points must
  be distinct, hence the boundary arcs are all in new curves and the
  projection map on any of the boundary arcs is not injective. This
  means that the assumptions of Lemma~\ref{lem:TwoNewCurves} are
  satisfied, implying $n_w(\psi )\geq 2$, contradicting our assumption
  $n _w (\psi )\leq 1$. In summary, we have showed that there is no
  non-constant, non-negative domain $\psi $ with $n_w(\psi )\leq 1$
  and $\mu (\psi )=0$ in our Heegaard diagram, which observation
  concludes the proof of the proposition.
\end{proof}

The next result provides a strong restriction on non-negative
domains with Maslov index one in \adapted Heegaard diagrams.

\begin{prop}
  \label{prop:OnlyIndexOnes}
  Let $\phi\in\pi_2(\x,\y)$ be any non-negative domain with Maslov
  index one and $n_w(\phi)\leq 1$. Then the associated domain $D$ of
  $\phi$ is the immersed image of either
\begin{enumerate}[label={(\Alph*)}]
    \item
      \label{case:Rectangle}
      a rectangle,
    \item
      \label{case:Annulus}
      a rectangle with a disk removed, or
    \item
      \label{case:Octagon}
      an octagon.
  \end{enumerate}
  Moreover, in Cases~\ref{case:Rectangle} and~\ref{case:Octagon}, the
  local multiplicities at all the coordinates of $\x$ and $\y$ are $0$
  or $\frac{1}{4}$, while in Case~\ref{case:Annulus} there is a single
  coordinate $p\in \x\cap\y$ where the local multiplicity is
  $\frac{1}{2}$, and at all other coordinates the local multiplicities
  are $0$ or $\frac{1}{4}$.
\end{prop}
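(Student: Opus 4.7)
My approach is to combine the Lipshitz formulae (Theorems~\ref{thm:lip1}, \ref{thm:lip2}, and \ref{thm:lip3} together with Equation~\eqref{e:branch}) with the diagram-specific restrictions in Lemmas~\ref{lem:ThereIsABasepoint}, \ref{lem:TwoNewCurves}, and \ref{lem:Principle}. The starting observation is that in an adapted Heegaard diagram every elementary domain is either a rectangle of Euler measure $0$ or an octagon of Euler measure $-1$, and only the octagons carry basepoints. Consequently $e(\phi)$ is a non-positive integer with $e(\phi)\geq -n_w(\phi)\geq -1$, and since $\mu(\phi)=1$ forces $\phi$ to be non-constant we must have $p(\phi)=1-e(\phi)>0$, so $e(\phi)\in\{-1,0\}$. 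Theorem~\ref{thm:lip2} and Equation~\eqref{e:branch} then give $\Delta(\phi)=1-2e(\phi)\in\{1,3\}$, $\chi(S)=d-\Delta(\phi)$ and $b=1-e(\phi)-d/2$; the requirement $b\geq 0$ bounds $d\leq 2(1-e(\phi))$, while the trivial inequality that each non-vanishing corner contributes at least $\tfrac14$ to $p(\phi)$ gives $p(\phi)\geq d/2$.

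With these constraints in place there are only finitely many pairs $(e(\phi),d)$ to examine, and I would treat the extremal ones first. When $(e,d)=(0,2)$ the surface $S$ is an unbranched disk ($\chi(S)=1$, $b=0$) and the point-measure identity $p(\phi)=1=4\cdot\tfrac14$ saturates the lower bound, forcing all four non-shared corners to have local multiplicity exactly $\tfrac14$; this is Case~\ref{case:Rectangle}. Symmetrically, for $(e,d)=(-1,4)$ one has $\chi(S)=1$, $b=0$, and $p(\phi)=2=8\cdot\tfrac14$ again saturates the bound, so all eight non-shared corners have multiplicity $\tfrac14$; this is Case~\ref{case:Octagon}. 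The pairs $(e,d)=(0,1)$ and $(e,d)=(-1,3)$ both yield $\chi(S)=0$ (an annulus with $b=\tfrac12$), and the tight point-measure count leaves room for exactly one shared coordinate $p\in\x\cap\y$ of multiplicity $\tfrac12$ together with all other non-vanishing corners of multiplicity $\tfrac14$; the alternative subcase in $(0,1)$ with two distinct non-shared corners each of multiplicity $\tfrac12$ is excluded by Lemma~\ref{lem:TwoNewCurves}, since in that subcase both corresponding boundary arcs must traverse new curves non-injectively, forcing $n_w(\phi)\geq 2$. This yields Case~\ref{case:Annulus}.

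The main obstacle is ruling out the remaining pairs $(e,d)=(-1,2)$ and $(e,d)=(-1,1)$, which would force $\chi(S)=-1$ and $\chi(S)=-2$ respectively. In both situations the fixed point measure $p(\phi)=2$ must be carried by very few corner slots, so some corner has local multiplicity at least $\tfrac12$ with a distribution of adjacent region multiplicities of the form $\{0,0,1,1\}$, $\{0,1,1,1\}$, or including a value $\geq 2$. Arguing as in the proof of Lemma~\ref{lem:Principle}, such a configuration produces at least two arcs on the boundary of $D(\phi)$ lying on distinct new $\alpha$- or $\beta$-curves whose projections to $T^2$ are not injective; by Lemma~\ref{lem:ThereIsABasepoint} each such arc is adjacent to an octagon containing a basepoint, and Lemma~\ref{lem:TwoNewCurves} then forces $n_w(\phi)\geq 2$, contradicting the hypothesis. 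Combining these exclusions with the identifications above completes the classification.
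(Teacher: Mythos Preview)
Your overall skeleton matches the paper's: use the Lipshitz formulae to reduce to finitely many pairs $(e(\phi),d)$ and analyse each. The gaps are in the execution of the individual cases.

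First, the pair $(e,d)=(0,1)$ is \emph{not} an instance of Case~\ref{case:Annulus}. With $d=1$ and $p(\phi)=1$ there are no coordinates of multiplicity $\tfrac14$ at all, so the resulting surface is not a ``rectangle with a disk removed''; this case must be \emph{excluded}, not classified. You also omit the sub-distribution $(\tfrac14,\tfrac34)$ entirely. The paper handles all of $e=0$ by invoking the Sarkar--Wang argument for nice diagrams, and your proposal contains no substitute for that.

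Second, for $(e,d)=(-1,3)$ the point-measure constraint $p(\phi)=2$ on six corners does \emph{not} force the distribution $(\tfrac12,\tfrac12,\tfrac14,\tfrac14,\tfrac14,\tfrac14)$; the alternative $(\tfrac34,\tfrac14,\tfrac14,\tfrac14,\tfrac14,\tfrac14)$ is equally consistent with the numerics. The paper rules this out by a homological argument on the projection to $T^2$ (the two-corner boundary component projects to $\pm A\pm B$, while the four-corner component projects either null-homologously or forces $n_w\geq 2$ via Lemma~\ref{lem:TwoNewCurves}). You have no argument for this sub-case.

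Third, your treatment of $(e,d)=(-1,2)$ and $(-1,1)$ by a blanket appeal to ``arguing as in Lemma~\ref{lem:Principle}'' does not work. For $d=2$ the paper separates five multiplicity distributions; only one of them (the $(\tfrac34,\tfrac34,\tfrac14,\tfrac14)$ case) is actually handled by Lemma~\ref{lem:Principle}. The cases $(\tfrac34,\tfrac12,\tfrac12,\tfrac14)$ and $(\tfrac12,\tfrac12,\tfrac12,\tfrac12)$ are excluded because $\chi(S)=-1$ forces a single boundary component, while a pair of distinct $\tfrac12$-corners needs its own boundary circle; the case $(1,\tfrac12,\tfrac14,\tfrac14)$ is excluded by a parity consideration; and $(\tfrac54,\tfrac14,\tfrac14,\tfrac14)$ needs a separate projection argument. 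None of these follow from the template of Lemma~\ref{lem:Principle}. Similarly, the paper's argument for $d=1$ is that $\chi(S)=-2$ forces an even number of boundary components while two corners force a single one --- a parity contradiction you never invoke.

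Finally, for $(e,d)=(-1,4)$ you conclude ``$\chi(S)=1$, hence a disk'' without addressing the possibility that $S$ is disconnected (e.g.\ a disk together with an annulus), which the paper explicitly treats and rules out via Lemma~\ref{lem:TwoNewCurves}.
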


\begin{proof}
Since all elementary domains which do not contain basepoints are
rectangles, the argument of \cite{SW} applies, showing that the domain
associated to any Maslov index one Whitney disk $\phi$ with
$n_w(\phi)=0$ is an embedded rectangle, and in fact all the local
multiplicities at $\x\cup \y$ are $\leq \frac{1}{4}$. (In fact, for
adapted diagrams this can be seen more directly; see the proof of
Proposition~\ref{prop:Rectangle} below.) This is
Case~\ref{case:Rectangle} above.

If we consider $n_w(\phi)=1$, then the Euler measure $e(\phi)$ is $-1$
(since the domain contains a unique octagon and some number of
rectangles), so since $\Mas(\phi)=1$, it follows that point measure
$p(\phi)$ must be equal to $2$.  From the equation $\Mas(\phi) =
\Delta (\phi)+2e (\phi)$ we deduce $\Delta (\phi)=3$.  Furthermore,
the Euler characteristic of the surface $S$ associated to $\phi$ (as
in Theorem~\ref{thm:lip3}) is given by Equation~\eqref{e:euler2} as
\[
\chi(S)=d-3.
\]
Since $p(\phi)=2$ implies that there are at most 8 coordinates in $\x
\cup \y$ with non-zero local multiplicities, we get that $d\leq 4$.
Consequently there are several cases to consider according to the
possible values of $d$:

\noindent {\bf Case I:} $d=1$

\noindent This case can be easily excluded, since by $d=1$ there are
only 2 coordinates with non-zero local multiplicity, hence the surface
$S$ has connected boundary. On the other hand,
Equation~\eqref{e:euler2} forces $\chi(S)$ to be $-2$ (implying the
existence of an even number of boundary components), giving a
contradiction.

\noindent {\bf Case II:} $d=2$

\noindent We claim that there are no domains with $\Mas(\phi)=1$,
$n_w(\phi)=1$, and $d=2$ in an \adapted Heegaard diagram. The argument
proceeds by a case-by-case check.  Notice first that when $d=2$, the
Euler characteristic of the surface $S$ is equal to $-1$, hence it has
an odd number of boundary components, which in this case can be only
one.  Since $d=2$, there are four corner points, and each comes with a
local multiplicity of at least $\frac{1}{4}$. Furthermore, the sum of
the local multiplicities is equal to $2$. Thus, the local
multiplicities can distribute over these four points according to the
following five possible ways:
\begin{list}
        {(II.\arabic{bean})}{\usecounter{bean}\setlength{\rightmargin}{\leftmargin\
}}
\item 
\label{k1}
$(1,\frac{1}{2},\frac{1}{4},\frac{1}{4})$
\item 
\label{k2}
$(\frac{3}{4},\frac{1}{2},\frac{1}{2},\frac{1}{4})$
\item 
\label{k3}
$(\frac{1}{2},\frac{1}{2},\frac{1}{2},\frac{1}{2})$
\item
\label{k4}
 $(\frac{3}{4},\frac{3}{4},\frac{1}{4},\frac{1}{4})$
\item
\label{k5} $(\frac{5}{4},\frac{1}{4},\frac{1}{4},\frac{1}{4})$.
\end{list}
Case~(II.\ref{k1}) does not exist, since if there is an
$\x$--coordinate with local multiplicity 1, then there should be a
similar $\y$--coordinate. Cases (II.\ref{k2}) and (II.\ref{k3}) are
impossible since a pair of $\x$-- and $\y$--coordinates with local
multiplicity $\frac{1}{2}$ require an entire boundary component, but
in our case the boundary is connected.

To exclude Case (II.\ref{k4}) first we show that the corner points
must project onto four distinct points on the torus.  Indeed, by
Property~\eqref{property:Inverses} of Proposition~\ref{p:Properties}
the corner points could {\em a priori} project to either two, three,
or four points. If the projection consists of two points, then it is
easy to see that both points lie on the same projected $\alpha_i$ or
$\beta_j$. Assume without loss of generality that they lie on the same
projected $\alpha_i$.  Then they lie on different projected $\beta_j$,
for which the projection $\pi $ is obviously not injective, and hence
Lemma~\ref{lem:TwoNewCurves} applies and provides a contradiction to
$n_w(\phi )\leq 1$. The case where the projection consists of three
points is excluded by Property~\eqref{property:ProjectGenerator} given
in Proposition~\ref{p:Properties}. Specifically, in this case it is
easy to see that there must be some $\alpha_i$ or $\beta_j$ whose
projection contains a corner point with multiplicity one, although
Property~\eqref{property:ProjectGenerator} cited above obviously
implies that the total multiplicity of corner points on the projection
of each $\alpha_i$ and $\beta_j$ must be even. We are left with the
possibility that the four corners project to four distinct points on
the torus. This case, however, is exactly the one handled by
Lemma~\ref{lem:Principle}, concluding the discussion of case
(II.\ref{k4}).

Case (II.\ref{k5}) can be excluded by analyzing the projection of the
boundary of the domain again.  First, observe that if we consider the
corner point of multiplicity $\frac{5}{4}$, then that must have some
adjoining region with local multiplicity at least $2$. Following the
region with multiplicity $2$ $\alpha$-curve, we arrive at another
corner which has no adjoining region with local multiplicity $2$; it
follows easily that the $\alpha$--curve had to be a new curve.
Similarly for the adjoining $\beta$--curve. Bearing this fact in mind,
we project the four corners down to the torus.  The corner points (as
before) project to either two, three, or four distinct points on the
torus. The case of two and three points can be excluded exactly as
before. Consider now the possibility that the corner points have four
distinct projections, which we denote by $\{\pi (x_1),\pi (y_1), \pi
(x_2) ,\pi (y_2) \}$.  Furthermore, let $a_1, a_2, b_1, b_2$ denote
the four sides of the image of the rectangle $S$ with a map $u$
representing $\phi$. According to Lemma~\ref{lem:TwoNewCurves} the
projection $\pi$ must be injective either on $a_1$ or on $a_2$ (and
similarly either on $b_1$ or on $b_2$). If $\pi$ on $a_1$ is not
injective (but it is on $a_2$), then the projection of the image of
$\partial S$ will have non-trivial $A$-component in the homology of
the torus, which would contradict the fact that this projection is
null-homologous. A similar observation applies to the $b$--sides,
implying at the end that the projection of the boundary of $u(S)$ is
an embedded (null-homologous) curve in $T^2$.  Thus, taking its
complement, we see that $\Sigma$ is divided into two regions where the
local multiplicities are locally constant. But at the four corners we
see at least three distinct local multiplicities, providing a
contradiction.

\noindent {\bf Case III:} $d=3$

\noindent In this case, the surface $S$ has vanishing Euler
characteristic, hence an even number of boundary components, which number 
(by $d=3$) is equal to two.  Thus, the surface is an annulus. On one
boundary component there are two corner points, and all the remaining
four are on the other component.  Regarding possible local
multiplicities, there are two cases to distinguish:

\noindent {\bf {(III.1)}}:
$(\frac{3}{4},\frac{1}{4},\frac{1}{4},\frac{1}{4},\frac{1}{4},\frac{1}{4})$,
and

\noindent {\bf {(III.2)}}:
$(\frac{1}{2},\frac{1}{2},\frac{1}{4},\frac{1}{4},\frac{1}{4},\frac{1}{4})$.

We discuss possibilities in {\bf {(III.1)}} first.  The boundary
component with two corners consists of two curves intersecting each
other twice, hence the arcs are in new curves and the projection map
$\pi$ is not injective on these arcs. In addition, their projection in
the torus represents the homology class $\pm A \pm B$. Now $\pi $ is
either non-injective on one of the arcs in the rectangular boundary
component (in which case Lemma~\ref{lem:TwoNewCurves} delivers a
contradiction to $n_w(\phi )\leq 1$), or $\pi $ is injective on the
rectangular boundary, consequently the projection of this boundary
component represents zero in the homology of the torus.  Since the
difference of the homology classes represented by the projections of
the two boundary components must vanish, we arrived to a
contradiction, showing that an \adapted Heegaard diagram does not
contain domains encountered in Case {\bf {(III.1)}}.

In case~ {\bf {(III.2)}} the two points with multipicity $\frac{1}{2}$
must coincide, hence in particular they are on the same boundary
component.  Abstractly, the surface $S$ from Theorem~\ref{thm:lip3}
is an annulus with four marked points on one component, and two on the
other.  For the component with two marked points, one of the arcs is
mapped onto an entire $\alpha$ (or $\beta$) circle, while the other
arc maps degree zero onto the corresponding $\beta$ (or $\alpha$)
circle. Abstractly such a domain is illustrated in
Figure~\ref{f:RectangleMinusDisk}; this is Case~\ref{case:Annulus} in
the proposition. Such domains can be found in adapted Heegaard
diagrams, as indicated in Figure~\ref{f:RectangleMinusDiskRealized}.

\begin{figure}[ht]
\begin{center}
\setlength{\unitlength}{1mm}
{\includegraphics[height=3cm]{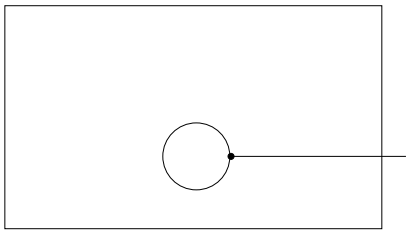}}
\end{center}
\caption{Schematic picture of domain in  Case~{\bf {(III.2)}}.} 
\label{f:RectangleMinusDisk}
\end{figure}

\begin{figure}[ht]
\begin{center}
\setlength{\unitlength}{1mm}
{\includegraphics[height=3cm]{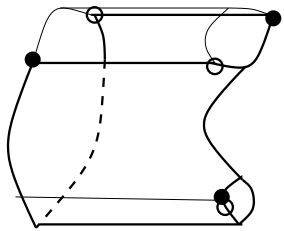}}
\end{center}
\caption{The realization of the domain of case {\bf {(III.2)}} in an
  adapted Heegaard diagram.}
\label{f:RectangleMinusDiskRealized}
\end{figure}

\noindent {\bf Case IV:} $d=4$

\noindent When $d=4$, there is only one possibility for the local
multiplicities to sum up to 2: we must have eight corner points, each
of local multiplicity $\frac{1}{4}$.  Once again, the count of Euler
characteristics and boundary components shows that there are two cases
to consider: either the domain has three boundary components, or it
has one. In the first case two boundary components consist of a pair
of $\alpha$-- and $\beta$--circles, intersecting each other twice,
hence admitting the property that $\pi $ is not injective on them. The
application of Lemma~\ref{lem:TwoNewCurves} (and the assumption
$n_w(\phi )\leq 1$) then shows the nonexistence of this
possibility. Therefore we are left with the case of a single boundary
component and a surface of Euler characteristic 1, i.e., an octagon
with local multiplicity $\frac{1}{4}$ at each of its eight corner
points, giving Case~\ref{case:Octagon}, and concluding the proof of
Proposition~\ref{prop:OnlyIndexOnes}. Notice that by
Equation~\eqref{e:branch} and Remark~\ref{r:sharp} in this case the
map $u$ given by Theorem~\ref{thm:lip3} is a local diffeomorphism.
\end{proof}

\section{Embeddedness of  Maslov index 1 domains} 
\label{sec:CalcContributions}

Suppose that $\phi\in \pi_2(\x ,\y )$ is a homology class of Whitney
disks with Maslov index 1 and $n_w(\phi)\leq 1$. According to
Proposition~\ref{prop:OnlyIndexOnes}, $\phi$ is represented by a
domain which is the image (in the sense of Theorem~\ref{thm:lip3}) of
a surface which is either a rectangle, an annulus of the shape given
by Figure~\ref{f:RectangleMinusDisk}, or an octagon.  The aim of the
present section is to strenghten this result, by showing that in each
of those cases, the domain is embedded (in the sense of
Definition~\ref{def:Domain}). The embeddedness of the domains will
then allow us to compute $c(\phi )$ and conclude the proof of
Theorem~\ref{t:contribution}.

\begin{thm}
  \label{thm:ClassifyDomains}
  Let $\phi\in\pi_2(\x,\y)$ be a non-negative homology class
  of Whitney disks in an adapted Heegaard diagram.
  Suppose moreover that $\phi$ has Maslov index one. 
  If  $n_w(\phi)=0$, then 
  \begin{itemize}
  \item $\phi$ is an embedded rectangle.
  \end{itemize}
  If $n_w(\phi)=1$, then once again $\phi$ is embedded. In fact,
  the domain of $\phi$ is one of the following two types
  of domain:
  \begin{itemize}
    \item it is an annulus with four marked points $x_1,y_1,x_2,y_3$ on 
      one boundary component, each of which has local multiplicity
      $\frac{1}{4}$, and one marked point $x_3=y_3$ on the other, with
      local multiplicity $\frac{1}{2}$, or
    \item it is a disk with eight marked boundary points each
      with multiplicity $\frac{1}{4}$ (i.e. an octagon).
  \end{itemize}
\end{thm}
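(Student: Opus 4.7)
My plan is to use Proposition~\ref{prop:OnlyIndexOnes}, which already classifies the topological type of the source surface $S$ from Theorem~\ref{thm:lip3} in each case: a rectangle (Case A), a rectangle with an open disk removed (Case B), or an octagon (Case C). What remains is to upgrade the immersed image description to an embedded one, i.e.\ to show the map $u\colon S\to\Sigma$ is injective, so that all local multiplicities of the domain lie in $\{0,1\}$.

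For Case A ($n_w(\phi)=0$), the domain is supported away from all basepoints and thus, by property~(1) of Proposition~\ref{p:Properties}, consists entirely of elementary rectangles. The diagram is nice in this region, so the Sarkar--Wang analysis forces the image to be an embedded rectangle. One must also rule out potential bigons formed by pairs of new curves (which intersect twice by property~(3) of Proposition~\ref{p:Properties}); here the equal-sign condition on those double intersections is what obstructs bigons via the mod~$2$ grading.

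For Case C, Equation~\eqref{e:branch} gives branching number $b=0$, so Remark~\ref{r:sharp} says $u$ is a local diffeomorphism. Upgrading this to a global embedding is the main technical obstacle, and I would argue by contradiction. If $u$ were not injective, some rectangle in $\Sigma$ would be covered with multiplicity $\geq 2$; since the hypothesis $n_w(\phi)=1$ pins the basepoint multiplicity to exactly one, the overlap must occur in a purely rectangular part of the diagram. The eight corners each having local multiplicity $\tfrac{1}{4}$ severely restricts how the eight alternating $\alpha/\beta$ boundary arcs can project to the grid torus $T^2$. I would then invoke Lemma~\ref{lem:TwoNewCurves} (to exclude configurations with two boundary arcs in new curves on which $\pi$ is not injective) together with the null-homology of $\pi(\partial D(\phi))$ in $T^2$ to rule out every nontrivial overlap pattern.

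For Case B, the ``inner'' boundary component of $S$ has one arc that maps onto a complete $\alpha$- or $\beta$-circle while the other arc is constant at the boundary branch point of multiplicity $\tfrac12$. Combining Lemma~\ref{lem:ThereIsABasepoint} with Lemma~\ref{lem:TwoNewCurves} forces this circle to be an old curve (otherwise $n_w \geq 2$). The outer rectangular boundary then bounds a rectangular piece of the domain that embeds by the Case~A argument, while the single octagon containing the basepoint picks up multiplicity exactly one. The hardest step is expected to be Case~C, because the octagonal source surface admits, a priori, the largest variety of self-overlap configurations, and the argument must extract a contradiction from purely combinatorial/homological data via the projection $\pi$.
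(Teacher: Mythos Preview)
Your overall architecture is right and matches the paper: reduce to Proposition~\ref{prop:OnlyIndexOnes}, then upgrade each of the three cases from ``immersed'' to ``embedded''. Case~A is fine and essentially what the paper does (the paper gives a slightly more self-contained argument via the projection $\pi$, but Sarkar--Wang is acceptable). However, your treatments of Cases~B and~C have genuine gaps.

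\textbf{Case B.} Several steps are not justified. First, the second arc on the inner boundary is not ``constant at the branch point''; it maps with degree zero to a $\beta$-circle, which can mean a nontrivial back-and-forth arc. Second, your claim that the inner circle must be \emph{old} does not follow from Lemmas~\ref{lem:ThereIsABasepoint} and~\ref{lem:TwoNewCurves} alone: those lemmas give $n_w\geq 2$ only when you exhibit \emph{two distinct} new curves with non-injective projection, and a single new inner circle provides only one. Third, the sentence ``the outer rectangular boundary then bounds a rectangular piece of the domain that embeds by the Case~A argument'' does not parse: the outer boundary is not null-homologous by itself (it is homologous to the inner boundary), and the domain contains the octagon, so the Case~A hypothesis $n_w=0$ fails. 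The paper instead argues directly that both boundary images $c_1,c_2\subset\Sigma$ are embedded (for $c_2$ this uses that $[\pi(c_2)]=[\pi(c_1)]$ has no $B$-component, combined with Lemma~\ref{lem:TwoNewCurves}), and then, crucially, shows $c_1\cap c_2=\varnothing$ by a multiplicity-propagation argument. You have no analogue of this disjointness step, without which embeddedness of the domain does not follow.

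\textbf{Case C.} You correctly isolate the two tools (Lemma~\ref{lem:TwoNewCurves} and null-homology of $\pi(\partial D)$), but ``rule out every nontrivial overlap pattern'' is where essentially all the work lies, and your proposal gives no indication of how. The paper first proves a structural lemma (Lemma~\ref{l:kizar}): if $u|_{\partial S}$ is not injective, then some corner and some non-corner boundary point have the same image, forcing a new $\alpha$- and a new $\beta$-arc on which $\pi$ is non-injective. It then performs a case analysis on $t_u=\#\{\pi(u(P_i))\}\in\{4,5,6,7,8\}$, with the $t_u=7$ and $t_u=8$ cases splitting into several sub-configurations (enumerated via figures), and introduces a second lemma (Lemma~\ref{l:megkizar}) comparing the homology class of an auxiliary ``short-path'' curve $\gamma$ in $T^2$ to $\pm A\pm B$. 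One residual sub-case even requires locating the unique basepoint explicitly and deriving a multiplicity contradiction. None of this is suggested by your outline; in particular, the argument is not a single contradiction but a rather delicate enumeration, and Lemma~\ref{lem:TwoNewCurves} by itself is far from sufficient.
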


The three cases above, of ourse, correspond to the to the three
cases enumerated in Proposition~\ref{prop:OnlyIndexOnes}.
We subdivide the proof into the three cases.
\begin{prop}(cf. also \cite{SW})
  \label{prop:Rectangle}
  If $\phi$ is as in Proposition~\ref{prop:OnlyIndexOnes}, and its
  domain is covered by Case~{\ref{case:Rectangle}} (i.e. it is an
  immersed rectangle), then the domain is embedded.
\end{prop}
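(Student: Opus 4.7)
The plan is to establish injectivity of $u$ by pushing everything down to the extended grid via $\pi\colon\Sigma\to T^2$. Let $u\colon S\to\Sigma$ be the map from Theorem~\ref{thm:lip3}, where $S$ is a rectangle with four boundary marked points $p_1,q_1,p_2,q_2$ whose images are the four corners of $\phi$, each of local multiplicity $\frac{1}{4}$. Since $d=2=2p(\phi)$, Remark~\ref{r:sharp} tells us that $u$ is a local diffeomorphism, so embeddedness of $D(\phi)$ reduces to injectivity of $u$.

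First I would show that the images under $u$ of the four marked points in $S$ are four distinct points of $\alphas\cap\betas$ in $\Sigma$, whose further projections to $T^2$ are four distinct points of $\as\cap\bs$. If some coincidences arose, a case analysis parallel to the treatment of Cases (II.\ref{k4}) and (II.\ref{k5}) in the proof of Proposition~\ref{prop:OnlyIndexOnes} would produce either two boundary arcs of $D(\phi)$ lying in new curves on which $\pi$ is non-injective (contradicting Lemma~\ref{lem:TwoNewCurves} together with the hypothesis $n_w(\phi)=0$), or an old curve in the extended grid meeting $\pi(\x)$ with odd multiplicity (contradicting Property~\eqref{property:ProjectGenerator} of Proposition~\ref{p:Properties}).

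Next I would analyze the composition $\pi\circ u\colon S\to T^2$. Since $n_w(\phi)=0$ forbids the basepoint-containing octagons of $\Sigma$ from appearing in $D(\phi)$ with positive multiplicity, and these octagons contain all branch points of $\pi$, the map $\pi\circ u$ is an honest local diffeomorphism whose image avoids the branch locus. Lemma~\ref{lem:ThereIsABasepoint} then rules out non-injectivity of $\pi$ on any boundary arc of $u(\partial S)$ lying in a new curve, since such a failure would place a basepoint-containing octagon adjacent to $u(\partial S)$ inside $D(\phi)$. A parallel, direct grid-tracing argument excludes non-injectivity along old-curve boundary arcs: if such an arc were to wrap once around its old $\mathfrak a$- or $\mathfrak b$-curve in $T^2$, the sub-annulus on the $D(\phi)$-side of that curve necessarily contains a basepoint, which would again appear in $D(\phi)$ with positive multiplicity.

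With injectivity on the boundary established and the four corners projecting to four distinct points, $\pi\circ u(\partial S)$ is an embedded, null-homologous quadrilateral in $T^2$ bounding a branch-free grid rectangle. Its preimage under $\pi$ consists of three disjoint embedded rectangles in $\Sigma$, and since $u(S)$ is connected it must coincide with one of them. Thus $u$ is a homeomorphism onto its image, and $D(\phi)$ is embedded. The main obstacle is the boundary-arc injectivity step, particularly for old-curve boundary arcs where Lemma~\ref{lem:ThereIsABasepoint} does not apply verbatim and one must rely on a direct inspection of the extended grid structure.
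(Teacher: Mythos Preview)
Your proposal is essentially correct and follows the same overall strategy as the paper: project to the extended grid, establish that the four corners have four distinct images in $T^2$, show that $\pi$ is injective on each boundary side, and conclude embeddedness. There are, however, two points where the paper's argument is more efficient and one place where your argument needs a small repair.

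First, the old-curve case you flag as the ``main obstacle'' is actually vacuous. Each boundary arc $a_i$ of $D(\phi)$ is, as a $1$-chain, an honest embedded sub-arc of its $\alpha$-curve $\alpha_j$ (this follows from the local multiplicity $\tfrac14$ at the corners). If $\alpha_j$ is old, then $\pi|_{\alpha_j}$ is a diffeomorphism onto an old $\mathfrak a$-curve, so $\pi|_{a_i}$ is automatically injective. Thus non-injectivity of $\pi$ on a boundary arc can only occur in a new curve. The paper exploits this implicitly: it argues that if $\pi$ fails to be injective on one $\alpha$-side then, in order for $\pi(\partial D)$ to be null-homologous in $T^2$, it must also fail on the opposite $\alpha$-side; both arcs then lie in new curves and Lemma~\ref{lem:TwoNewCurves} gives $n_w(\phi)\ge 2$. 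This single homology-pairing step replaces your separate new/old analysis and your invocation of Lemma~\ref{lem:ThereIsABasepoint}.

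Second, your final lifting step contains a small inaccuracy. The rectangle $R\subset T^2$ bounded by $\pi(u(\partial S))$ need not be branch-free: an elementary grid square containing an $X$ or $O$ has both an octagonal and a rectangular preimage in $\Sigma$, and $u(S)$ may pass through the rectangular one while avoiding the octagon. So $\pi^{-1}(R)$ is not in general three disjoint rectangles. This does not damage your conclusion, since once you know $\pi\circ u\colon S\to T^2$ is a local diffeomorphism with embedded boundary image $\partial R$, an Euler-characteristic count forces $\pi\circ u$ to be a diffeomorphism onto the disk component $R$, and injectivity of $u$ follows immediately. The paper instead finishes entirely in $\Sigma$: once $\partial D$ is embedded and null-homologous there, it separates $\Sigma$ into two regions of constant multiplicity, which must be $0$ and $1$.
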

\begin{proof}
  Suppose that all the unmarked elementary domains in a multi-pointed
  Heegaard diagram are rectangles (or bigons), i.e., the Heegaard
  diagram is nice. Then, as in~\cite{SW}, all non-negative domains
  with Maslov index one and $n_w(\phi )=0$ are rectangles (or bigons).
  In an \adapted Heegaard diagram the argument is slightly simpler, in
  fact, we claim that for a rectangle $D$ even the projection $\pi
  (\partial D)$ of the boudary $\partial D$ into the grid diagram is
  embedded. Indeed, the argument used in proving cases (II.\ref{k4})
  and (II.\ref{k5}) above applies and shows that the four corner
  points must project to four different points.  Now if the projection
  $\pi$ is not injective on a side $a_1$, then (in order $\pi
  (\partial S)$ to represent zero in homology) the same should hold
  for the opposite side $a_2$ (since all the local multiplicities are
  equal to $\frac{1}{4}$), which by Lemma~\ref{lem:TwoNewCurves}
  implies that $n_w(\phi)\geq 2$, contradiction our assumption.  Thus,
  $\partial D\subset \Sigma$ is an embedded, null-homologous curve.
  As such, it divides $\Sigma$ into two components where the local
  multiplicity is constant. Since there is one corner with local
  multiplicity $\frac{1}{4}$, these two constants are $0$ and $1$:
  i.e. the domain of $\phi$ is embedded.
\end{proof}
\begin{prop}
  \label{prop:Annulus}
  If $\phi$ is as in Proposition~\ref{prop:OnlyIndexOnes}, and its
  domain falls under Case~\ref{case:Annulus} (i.e.  it is an immersed
  rectangle-minus-a-disk), then its domain is embedded.
\end{prop}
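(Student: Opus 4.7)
My plan is to show that the map $u\colon S\to\Sigma$ from Theorem~\ref{thm:lip3}, with source an annulus $S$, is injective on the interior of $S$, so that the domain $D$ is an embedded rectangle-minus-disk. Denote by $C_1$ the boundary component of $S$ carrying the four corners $x_1,y_1,x_2,y_2$ of local multiplicity $\tfrac14$, and by $C_2$ the component carrying the single corner $p=x_3=y_3$ of local multiplicity $\tfrac12$. Equation~\eqref{e:branch} gives branch number $b=\tfrac12$, so $u$ has exactly one boundary branch point, located at $p$. Following the Case~III.2 analysis in the proof of Proposition~\ref{prop:OnlyIndexOnes}, one arc of $C_2$ covers a full circle in $\Sigma$ (say an $\alpha$-curve $\alpha_k$), while the other has degree zero on the corresponding $\beta$-curve; consistency with the branch point at $p$ forces this second arc to be constant at $p$, so $u(C_2)=\alpha_k$ is an embedded circle.

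The heart of the argument is to show that $\alpha_k$ must be an \emph{old} $\alpha$-curve. Suppose for contradiction that $\alpha_k$ is new. By Lemma~\ref{lem:ThereIsABasepoint} applied to the $C_2$ arc on $\alpha_k$, the octagon on the side of $\alpha_k$ into which $D$ extends contains a basepoint, saturating $n_w(\phi)=1$. Since $\pi|_{\alpha_k}$ is a double cover of $\mathfrak{a}_{i+\frac{1}{2}}\subset T^2$, the cycle $\pi(u(C_2))$ represents $2A\in H_1(T^2;\mathbb{Z})$, so $\pi(u(C_1))$ must represent $-2A$. A case analysis on how the four arcs of $C_1$ can realize this class yields a contradiction: an $\alpha$-arc of $C_1$ lying on a new $\alpha$-curve other than $\alpha_k$, with non-injective projection, would by Lemma~\ref{lem:TwoNewCurves} force $n_w(\phi)\ge 2$; while if all the non-injective $\alpha$-arcs of $C_1$ lie on $\alpha_k$ itself, then $\alpha_k$ is traversed on $\partial D$ with total covering at least two, forcing the local multiplicity of the adjacent octagon in $D$ to exceed $1$ and again giving $n_w(\phi)\ge 2$. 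Either alternative contradicts $n_w(\phi)\le 1$.

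With $\alpha_k$ an old curve, the remaining analysis parallels the proof of Proposition~\ref{prop:Rectangle}. The four corners of $C_1$ must project to distinct points of $T^2$ (by Properties~(4) and~(5) of Proposition~\ref{p:Properties}, together with the reasoning of Lemma~\ref{lem:Principle}), and the arcs of $C_1$ project sufficiently injectively that $u(C_1)$ is an embedded curve in $\Sigma$ disjoint from $\alpha_k$: any additional non-injective arc on a new $\alpha$-curve would, together with the basepoint already counted in $n_w(\phi)$ from the octagon adjacent to $\alpha_k$ on the $D$-side, invoke Lemma~\ref{lem:TwoNewCurves} and exceed the $n_w(\phi)\le 1$ bound. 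The multiplicities in the annular region between $u(C_1)$ and $\alpha_k$ are then pinned to $1$ by the corner multiplicities, so $D$ is embedded as claimed. I expect the main obstacle to be Step~2, where several configurations arising from the new-curve assumption --- in particular the subcase in which arcs of $C_1$ lie on $\alpha_k$ itself --- must be excluded using the fine interplay between the $n_w$ bound and the combinatorics of the adapted Heegaard diagram.
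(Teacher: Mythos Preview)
Your overall strategy --- show that $u(\partial S)$ is embedded, then deduce that $D(\phi)$ takes only the values $0$ and $1$ --- is sound, and your intermediate claim that $\alpha_k$ must be an old curve is in fact true (the authors note this in passing).  However, your execution has several genuine gaps, and the detour through this claim does not actually simplify the endgame compared with the paper's direct route.

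First, your dichotomy for ruling out $\alpha_k$ new is incomplete.  The two cases you state are ``some $\alpha$-arc of $C_1$ lies on a new curve $\neq\alpha_k$ with non-injective projection'' versus ``all non-injective $\alpha$-arcs of $C_1$ lie on $\alpha_k$''.  But the second case is vacuous: no arc of $C_1$ can lie on $\alpha_k$, since $\alpha_k$ already carries the $\x$-coordinate $x_3=p$ and the $\y$-coordinate $y_3=p$, and a single $\alpha$-circle cannot carry two distinct $\x$- or $\y$-coordinates.  What you have \emph{not} addressed is the remaining possibility that both $\alpha$-arcs of $C_1$ have injective projection.  To close this gap you would need the following observation: since each corner of $C_1$ has local multiplicity $\tfrac14$, the projected loop $\pi(u(C_1))$ turns $90^\circ$ left at every corner, so the two projected $\alpha$-arcs are oppositely oriented; if each is a simple arc in $T^2$, the net $A$-winding lies in $\{-1,0,1\}$, contradicting $\pm 2A$.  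This is the missing ingredient.

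Second, your final paragraph is confused.  Having concluded that $\alpha_k$ is old, you then invoke ``the basepoint already counted in $n_w(\phi)$ from the octagon adjacent to $\alpha_k$ on the $D$-side'' --- but that basepoint was produced by Lemma~\ref{lem:ThereIsABasepoint} under the assumption that $\alpha_k$ is \emph{new}, so it is no longer available.  Your appeal to Lemma~\ref{lem:TwoNewCurves} therefore has only one arc to feed it, not two.  Moreover, you focus on $\alpha$-arcs of $C_1$, whereas the obstruction to embeddedness of $u(C_1)$ comes from the $\beta$-arcs: a self-intersection of $u(C_1)$ would be a second intersection of some $\alpha_{i}$ and $\beta_{j}$ carrying adjacent corners, forcing both to be new and forcing $\pi$ to be non-injective on the corresponding $\beta$-arc.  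The paper handles this cleanly by observing that $[\pi(c_1)]$ has zero $B$-component (true whether $\alpha_k$ is old or new), so if one $\beta$-arc of $C_1$ has non-injective projection then so does the other, and Lemma~\ref{lem:TwoNewCurves} applies to that pair.  You never run this argument.

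Finally, you assert but do not prove that $u(C_1)$ is disjoint from $\alpha_k$.  The paper addresses this separately: if the two images met, some elementary region would acquire multiplicity $2$, and propagating along the row or column forces multiplicity $2$ adjacent to a corner of $C_1$, contradicting the $\tfrac14$ hypothesis.  This step is independent of whether $\alpha_k$ is old.

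In short, the paper's proof never needs to know whether $\alpha_k$ is old: the $B$-component constraint on $\pi_*[u(C_1)]$ controls the $\beta$-arcs directly, and a separate multiplicity argument gives disjointness of the two boundary images.  Your route through ``$\alpha_k$ is old'' can be made to work, but you would still need both of those arguments afterward, so the detour buys nothing.
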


\begin{proof}
  Suppose that $\phi \in \pi_2(\x,\y )$ is a non-negative domain with
  Maslov index one, and $S$ is an annulus (provided by
  Theorem~\ref{thm:lip3}) as indicated in
  Figure~\ref{f:RectangleMinusDisk}.  Let $\gamma_1\subset S$ denote
  the circular boundary component of the surafe, while $\gamma_2
  \subset S$ the rectangular one.
  
  Our first goal is to show that image $c_1\cup c_2$ of $\gamma_1\cup
  \gamma_2$ under the map $u$ is an embedded one-manifold in
  $\Sigma$. Note that the curve $\gamma_1$ covers an $\alpha$- or
  $\beta$-circle; the two cases are similar, so we assume the former,
  and write $\alpha_i$ for the $\alpha$-circle covered by $\gamma_1$.
  Since $\alpha _i$ is a simple closed curve in $\Sigma$, and the
  local multiplicities in the corners on this boundary components are
  $\frac{1}{2}$, we get that $c_1=u(\gamma_1)$ is an embedded curve.
  We claim that $c_2=u(\gamma_2 )$ is also an embedded curve in the
  Heegaard surface $\Sigma$.  By the local multiplicities at the
  corner points it is obvious that $u$ is injective on the corners of
  $\gamma _2$. In fact, the same argument shows that $u$ is injective
  on each arc connecting two neighboring corner point.  Consider the
  projection $\pi(c_2)$.  The same analysis used earlier (for example,
  in excluding Case (II.\ref{k4}) in
  Proposition~\ref{prop:OnlyIndexOnes}) shows that the four corner
  points on $c_2$ map to four different points under $\pi $.  Since
  $c_1$ and $c_2$ are homologous to each other in $\Sigma$, we get
  that $\pi (c_1)$ is homologous to $\pi (c_2)$ in $T^2$, and $\pi
  (c_1)$ (as the image of an $\alpha$--curve) has no $B$--component.
  Now if the projection is not injective on a boundary $\beta$--arc in
  $c _2$, then it is not injective on the other boundary $\beta$--arc
  as well (since the $B$--component of the homology class of $T^2$
  determined by the projection of $c_2$ is 0), and so
  Lemma~\ref{lem:TwoNewCurves} applies again and gives a
  contradiction.  Therefore $\pi$ is injective on the $\beta$--sides
  of $c_2$, which now easily implies that $c_2$ is embedded in
  $\Sigma$.


  Thus, the only remaining possible singularities in $c_1\cup c_2$ are
  the intersection points between $c_1$ and $c_2$. The projections
  $\pi(c_1 )$ and $\pi(c_2 )$ can meet in at most two points $p_1$ and
  $p_2$, as it is illustrated in the second picture in
  Figure~\ref{f:poss}.  However, in this case, we claim that $c_1$ and
  $c_2$ are nonetheless disjoint: otherwise, there would be some
  region adjoining $p_i$ where the multiplicity of the domain $\phi$
  is $2$.  Considering the other corners of this region, we would be
  able to conclude also that the local multiplicity of $\phi$ at some
  region adjacent to one of the corners of $\phi$ is also $2$, a
  contradition to our hypothesis on $\phi$ (according to which all
  local multiplicities of $\phi$ near its corner points are $\leq 1$).
  
  Now, $c_1\cup c_2$ divides $\Sigma$ into two components (since $c
  _1$ is homologically non-trivial even when projected to $T^2$, and
  $c _2$ is homologous to $c _1$), thus there are exactly two possible
  local multiplicities for any interior point in the domain of
  $\phi$. Since near each corner point, both both $0$ and $1$ appear
  as local multiplicities, it follows that our domain $\phi$ is
  embedded.
\end{proof}

\begin{figure}[ht]
\begin{center}
\setlength{\unitlength}{1mm}
{\includegraphics[height=1cm]{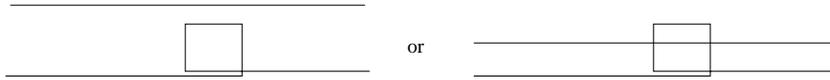}} 
\end{center}
\caption{Possibilities for the projection of $\partial D(\phi )$ for a 
homology class $\phi $ is Case~\ref{case:Annulus}.}
\label{f:poss}
\end{figure}

It remains to consider now Case~\ref{case:Octagon}, the case of
octagons.  The argument to verify this last case of
Theorem~\ref{thm:ClassifyDomains} involves a case-by-case
analysis. Before turning to this enumeration, we start with a simple
result providing a condition for the boundary of an octagon to be
embedded.

\begin{lem}\label{l:kizar}
Suppose that $\phi\in\pi_2(\x,\y)$ is a non-negative homology class of
Whitney disks with Maslov index one in an adapted Heegaard diagram,
and consider the corresponding map $u \colon S\to \Sigma $ from
Theorem~\ref{thm:lip3}. Suppose moreover that $u$ is not an embedding
along $\partial S$. Then there is a vertex $P$ of $S$ and a point
$x\in \partial S$ distinct from all the vertices such that $u (P)=u
(x)$. In particular, $\partial D(\phi )$ contains at least one new
$\alpha$-- and one new $\beta$--curve, on which the projection $\pi $
is not injective.
\end{lem}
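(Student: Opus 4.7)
The plan is to apply this lemma in the octagonal case of Proposition~\ref{prop:OnlyIndexOnes}, where it is invoked; there Remark~\ref{r:sharp} ensures that $u$ is a local diffeomorphism, and every corner of $D(\phi)$ has local multiplicity exactly $\frac{1}{4}$. I first want to rule out two distinct vertices $P\neq Q$ of $S$ mapping to the same point. Each vertex is a corner of $D(\phi)$ contributing $\frac{1}{4}$ to the local multiplicity at its image; two such contributions at one point would yield local multiplicity at least $\frac{1}{2}$ (whether the two contributions fall in the same quadrant or in distinct ones), contradicting the $\frac{1}{4}$-value of every corner in the octagonal case.

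Assuming the lemma's conclusion fails, we would have $u(p)=u(q)$ for distinct $p,q\in\partial S$ with neither a vertex, and I will aim for a contradiction. Let $e_p,e_q$ be the edges of $\partial S$ containing $p,q$. When $u(e_p)$ and $u(e_q)$ lie in the same $\alpha$- or $\beta$-curve, I apply a walking argument: starting from $p,q$ and traversing $e_p,e_q$ in directions matching the local diffeomorphism, the two walks trace the same arc in the shared curve until a vertex is reached. Simultaneous arrival at vertices produces a vertex-to-vertex coincidence, contradicting the preceding step; asymmetric arrival produces the vertex-to-interior coincidence we are supposedly excluding. Either way we reach a contradiction.

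In the remaining case, $e_p\subset\alpha_i$ and $e_q\subset\beta_j$ lie in curves of different types, and $u(p)=u(q)\in\alpha_i\cap\beta_j$ is a transverse crossing. The half-disk neighborhoods of $p,q$ in $S$ overlap in exactly one of the four quadrants at this intersection, producing local multiplicities $(m,m+1,m+1,m+2)$ for some $m\geq 0$, with average $m+1\geq 1$. Since this average is a positive integer, the point cannot be a $\frac{1}{4}$-corner of $D(\phi)$; a similar computation excludes its being a shared $\x\cap\y$-coordinate (where the local multiplicity would have to vanish). This is the main obstacle of the proof: one must trace the resulting high-multiplicity region through the global octagonal domain and derive a contradiction, using that the only corners available in the octagonal case are the eight vertices of $S$, each with the tight local structure $(1,0,0,0)$, and that the budget $p(\phi)=2$ and $e(\phi)=-1$ leaves no room for an auxiliary ``thick'' region that does not eventually bump into a corner.

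Finally, for the ``in particular'' clause, let $P$ be the vertex and $x$ the non-vertex with $u(P)=u(x)$, and let $e_x\subset\alpha_k$ be the edge of $\partial S$ containing $x$ (the case $e_x\subset\beta_k$ is symmetric). Since $u(P)$ lies on a unique $\alpha$-curve, $\alpha_k$ coincides with the $\alpha$-curve meeting $P$; since $x$ is interior to $e_x$, the $\alpha$-arc through $u(P)$ in $\partial D(\phi)$ extends past the corner on both sides of $\alpha_k$. Such an extension past a $\frac{1}{4}$-corner forces $u(e_x)$ to wrap around $\alpha_k$ far enough that $\pi|_{u(e_x)}$ is non-injective, i.e., $u(e_x)$ projects onto a full copy of the new curve $\mathfrak{a}_k$ in $T^2$. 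Since by construction old $\alpha$-curves in $\Sigma$ project homeomorphically, $\alpha_k$ must be a new curve; the analogous argument applied to the $\beta$-edge meeting $P$ produces the corresponding new $\beta$-curve.
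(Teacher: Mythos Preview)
Your proof has a genuine gap in the crucial case. In your step treating $e_p\subset\alpha_i$ and $e_q\subset\beta_j$, you compute the local multiplicities at the crossing, observe that this point cannot be a corner, and then write that ``this is the main obstacle of the proof: one must trace the resulting high-multiplicity region through the global octagonal domain and derive a contradiction\ldots'' --- but you never carry this out. A description of what remains to be done is not a proof, and the vague appeal to the point-measure budget $p(\phi)=2$ does not by itself force a contradiction. (Your local-multiplicity pattern $(m,m+1,m+1,m+2)$ is also not fully justified: other boundary preimages of the same point could alter it.)

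The paper resolves this case by a completely different, concrete combinatorial mechanism. Since $u$ is a local diffeomorphism, it pulls back the elementary-domain decomposition of $\Sigma$ to a tiling of the octagon $S$ by rectangles and one octagon; let $A_i$ (resp.\ $B_j$) denote the number of subintervals into which the side $a_i$ (resp.\ $b_j$) is cut. Given $x_0\in a_1$, $y_0\in b_i$ with $u(x_0)=u(y_0)$: if $i\in\{1,4\}$ (adjacent edges), the fact that $\alpha_1$ and $\beta_1$ meet in at most two points forces one of the two intersections to be the corner, yielding the vertex/non-vertex coincidence directly. If $i\in\{2,3\}$, one slides $y_0$ along the horizontal tile-line through it (the preimage of $\alpha_1$) while $x_0$ moves in lockstep along $a_1$; either $y_0$ hits an adjacent side (reducing to the previous case), or it reaches $b_3$ after $A_3$ steps, forcing $A_1>A_3$. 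Iterating this walk around the octagon eventually produces the contradictory inequality $A_3>A_1$. This walking argument is the substance you are missing.

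There is a second, smaller gap in your treatment of the ``in particular'' clause. Your argument produces a new $\alpha$-curve on which $\pi$ is non-injective when $e_x$ is an $\alpha$-edge, but the sentence ``the analogous argument applied to the $\beta$-edge meeting $P$'' does not go through: you have no coincidence involving an interior point of that $\beta$-edge, so nothing forces it to wrap. One needs both an $\alpha$- and a $\beta$-conclusion, and the single coincidence $u(P)=u(x)$ only directly yields one of them.
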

\begin{proof}
To set up notation, let the octagon $S$ be as given by
Figure~\ref{f:octa}, with the sides of it denoted by $a _1 , \ldots ,
a _4$ and $ b_1, \ldots , b _4$.  Since by Equation~\eqref{e:branch}
and Remark~\ref{r:sharp} the map $u \colon S \to \Sigma$ in this case
is a local diffeomorphism, it equips $S$ with a tiling (as in the
proof of \cite[Theorem~3.2]{SW}, cf. also \cite{Lip}). In our case,
since we have $n_w(\phi)=1$, the tiling will contain a single octagon
and a number of rectangles. Let $A_i$ denote the number of
subintervals a side $a _i$ is cut into by the $\beta$--curves, and
define $B_j$ similarly.
\begin{figure}[ht]
\begin{center}
\setlength{\unitlength}{1mm}
{\includegraphics[height=6cm]{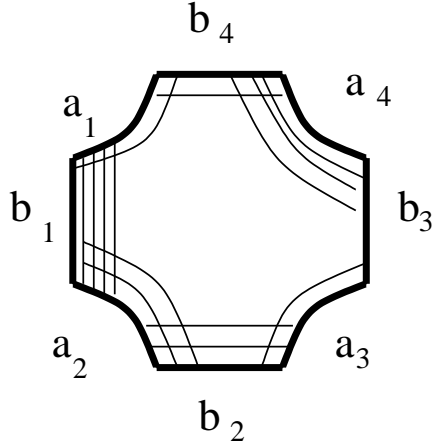}}
\end{center}
\caption{The octagon with the tiling.}
\label{f:octa}
\end{figure}

Suppose that $u$ is not an embedding on $\partial S$, i.e. there is a
point $x_0\in a_1$ which maps to the same point as some $y_0\in b _i$.
If $i=1$ (or $i=4$), then the lemma is proved: if $u(x_0)$ and
$u(y_0)$ are both intersections of $\alpha _1$ and $\beta _1$ then
(since these curves have exactly two intersections) one of them must
be a corner point and the other one cannot be a corner point. (A
similar argument applies for $i=4$.)  The arguments for $i=2$ or $i=3$
are symmetric, so we can assume that $i=2$. The point $y_0$ now can
move along the preimage via $u$ of an $\alpha$--curve, intersecting
$b_2$; on the tiling it either moves towards $b_1$ or $b_3$.  Every
step $y_0$ makes is mirrored by a step of $x_0$ on $a_1$. Since the
local multiplicity of any corner point is $\frac{1}{4}$, in this
procedure $y_0$ will reach $b_1$ or $b_4$ before $x_0$ moves to the
endpoint of $\alpha_1$. If $y_0$ reaches $b_1$, we get the desired
second intersection of the image of $a_1$ and $b _1$, and the
statement is proved.  If it moves towards $b_3$, then it will take
exactly $A_3$ steps before it reaches that boundary component, hence
we conclude that $A_1>A_3$.  The terminal points of this procedure are
denoted by $x_1$ and $y_1$ respectively.  Now using the same principle
for $x_1$ and $y_1$, by moving $x_1$ we either prove the statement (if
$x_1$ moves towards $a_4$), or we get $B_1<B_3$ and a new point pair
$x_2$ and $y_2$ (now on $a_2$ and $b_3$). Repeating the same
procedure, eventually we either get the desired configuration, or
derive the inequality $A_3>A_1$, which contradicts the inequality
$A_1>A_3$ found above.
\end{proof}
Now we are to discuss the last case in the proof of
Theorem~\ref{thm:ClassifyDomains}.

\begin{prop}
  \label{prop:Octagon}
  If $\phi$ is as in Proposition~\ref{prop:OnlyIndexOnes}, 
  and its domain falls under Case~\ref{case:Octagon} (i.e.
  it is an immersed octagon), then its domain
  is embedded.
\end{prop}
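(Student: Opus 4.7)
The plan is to derive a contradiction to the hypothesis $n_w(\phi)=1$ by assuming $D(\phi)$ is non-embedded and producing $n_w(\phi)\geq 2$. Since in Case~\ref{case:Octagon} we have $d=4$ and $p(\phi)=2$, Remark~\ref{r:sharp} tells us that $u\colon S\to\Sigma$ is a local diffeomorphism.

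First I would reduce non-embeddedness of $D(\phi)$ to the failure of $u|_{\partial S}$ to be an embedding. If $u|_{\partial S}$ were an embedding, then $u(\partial S)$ would be an embedded piecewise-smooth closed curve in $\Sigma$, and $u$ would restrict to a local diffeomorphism from the open disk $\mathrm{Int}(S)$ into some component of $\Sigma\setminus u(\partial S)$. A compact local diffeomorphism is a covering map, and since the boundary map is a bijection the cover has degree one; hence $u$ would be a homeomorphism onto its image and $D(\phi)$ would be embedded, contradicting our hypothesis.

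With $u|_{\partial S}$ known to fail to be an embedding, I would invoke Lemma~\ref{l:kizar} to obtain boundary arcs $a,b\subset\partial S$ of the octagon such that $u(a)$ lies in a new $\alpha$-curve, $u(b)$ lies in a new $\beta$-curve, and $\pi$ is non-injective on each of $a$ and $b$. The crux of the argument is then to upgrade this to the existence of \emph{two} distinct boundary arcs of $\partial S$ in new $\alpha$-curves (or, by symmetry, two in new $\beta$-curves) on which $\pi$ is non-injective. I would do this by continuing the iterative tiling argument given in the proof of Lemma~\ref{l:kizar}: that proof produces asymmetries such as $A_1>A_3$ and $B_1<B_3$ on the row/column counts of the tiling of $S$, and propagating the same reflection mechanism around the eight-fold structure of the octagon eventually forces a second boundary arc $a'\subset\partial S$ lying in a new $\alpha$-curve with $\pi|_{a'}$ non-injective. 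Once two such arcs are in hand, Lemma~\ref{lem:TwoNewCurves} immediately yields $n_w(\phi)\geq 2$, the desired contradiction.

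The main obstacle is the iterative upgrade step: the chain of tile-by-tile identifications around the boundary must be tracked carefully, and one must verify that the procedure terminates by producing a genuinely distinct second non-injective new $\alpha$-arc rather than merely an abstract inconsistency. A secondary subtlety arises in the borderline case that the two produced new $\alpha$-arcs happen to lie in a common new $\alpha$-curve $\alpha_{\mathrm{new}}\subset\Sigma$; in that situation the proof of Lemma~\ref{lem:TwoNewCurves} must be re-examined, and I would argue that since each of $a$ and $a'$ is forced by non-injectivity of $\pi$ to cover a full fundamental domain of the degree-two map $\pi\colon\alpha_{\mathrm{new}}\to{\mathfrak a}_{\mathrm{new}}$, the adjacent octagons supplied by Lemma~\ref{lem:ThereIsABasepoint} lie in the two distinct pairs of pants into which $\alpha_{\mathrm{new}}$ cuts the relevant $4$-punctured sphere component of $\pi^{-1}(A_{i,i+1})$, and are in particular distinct, so that $n_w(\phi)\geq 2$ still follows.
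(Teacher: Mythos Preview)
Your reduction from embeddedness of $D(\phi)$ to embeddedness of $u|_{\partial S}$ is fine, and your invocation of Lemma~\ref{l:kizar} is correct as far as it goes. The genuine gap is the ``upgrade'' step. Lemma~\ref{l:kizar} produces exactly \emph{one} new $\alpha$--arc and \emph{one} new $\beta$--arc on which $\pi$ fails to be injective; its iterative tiling argument terminates in a numerical contradiction ($A_1>A_3$ versus $A_3>A_1$), not in the production of a second non-injective $\alpha$--arc. Your proposal to ``continue the reflection mechanism around the eight-fold structure'' to manufacture a second such arc is not justified, and in fact cannot work in general.

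The obstruction is concrete. The projection $\pi(u(\partial S))$ must be null-homologous in $T^2$, and it differs from a short reference curve $\gamma$ (built from the corner projections) by adding full turns along new curves. Each full $\alpha$--turn contributes $\pm A$ and each full $\beta$--turn contributes $\pm B$. If $[\gamma]=\pm A\pm B$, then a \emph{single} non-injective new $\alpha$--arc together with a \emph{single} non-injective new $\beta$--arc suffices to make the boundary null-homologous, and there is no reason a second non-injective $\alpha$--arc should appear. The paper's proof shows that this situation actually arises (the configuration of Figure~\ref{f:fourposs}(4), with both full turns added at the same corner), and it must be excluded by an ad hoc argument locating the unique basepoint and deriving a multiplicity contradiction---not by Lemma~\ref{lem:TwoNewCurves}. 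This is precisely why the paper carries out the lengthy case analysis on $t_u$ and on the homology class of $\gamma$ (Lemma~\ref{l:megkizar}): most configurations are handled by your mechanism, but the residual $[\gamma]=\pm A\pm B$ case genuinely requires a different idea.
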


\begin{proof}
Suppose now that $P_1, \ldots , P_8$ denote the eight corners of the
octagon, and for a map $u\colon S \to \Sigma $ representing $\phi$,
consider the points $u (P_i)$.  Since the local multiplicity of each
is equal to $\frac{1}{4}$, these points are all distinct. Define $t_{u
}$ as the cardinality of the set $\{ \pi (u (P_i ))\}\subset
T^2$. Since any intersection point in the extended grid has at most
two intersection point preimages in $\Sigma$, we get that $t_{u }$ is
between 4 and 8. As in the proof of Proposition~\ref{prop:Annulus}, we
start by showing that $u$ is an embedding along $\partial S$. The
somewhat lenghty argument is divided into cases acording to the value
of $t_u$.

If $t_{u }=4$ then there is no point which could play the role of $x$
of Lemma~\ref{l:kizar}, since $x$ was not a corner, while the preimage
of each $u (P_i)$ consists of only corners in this case. Said it in
another way: the curves connecting the images must all be old curves
(since they contain four coordinates of $\x$ and $\y$), but according
to Lemma~\ref{l:kizar} the boundary $\partial S$ is embedded if all
the curves are old curves.

We exclude the possibility that $t_{u }=5$ as follows.  By
Property~\eqref{property:ProjectGenerator} of
Proposition~\ref{p:Properties}, for the two intersection points $\x$
and $\y$ any curve in $T^2$ contains an even number of projections of
coordinates. Now consider a point of the torus to which only one $u
(P_i)$ projects. Consider either the $\alpha$-- or the $\beta$--curve
passing through it, whichever does not contain the other point with a
single preimage of the type $u (P_i)$. (Since there is only one
further such point, there is a curve with this property.) Now this
curve contains a single coordinate, plus the ones coming from the
points having two preimages among the $u (P_i)$, changing the number
of coordinates by 2. Therefore this number will have the wrong parity,
concluding the analysis of the case $t_{u}=5$.

Assume next that $t_{u}=6$.  In this case there are 4 points on the
torus $T^2$ (which we will denote by $Q_1, Q_2, Q_3, Q_4$) with single
preimages, and two ($Q_5$ and $Q_6$) with two preimages. The argument
given above with the parities shows that $Q_1, Q_2, Q_3, Q_4$ must
determine a rectangle in the grid, determining four curves ${\mathfrak
  a}_1, {\mathfrak a}_2$ and ${\mathfrak b}_1, {\mathfrak b}_2$.  The
last 2 points $Q_5$ and $Q_6$ must be on these four curves. If both
$Q_5, Q_6$ are on ${\mathfrak a}$--curves (say, on ${\mathfrak a}_1$
and ${\mathfrak a}_2$) then they must share a ${\mathfrak b}$--curve,
otherwise the ${\mathfrak b}$--curves on which they are located must
be new curves in the grid diagram (since we need to pass from one
sheet to the other on the curve), and the existence of two new curve
segments in the boundary on which the projection is non-injective (by
Lemma~\ref{lem:TwoNewCurves}) contradicts $n_w(\phi )=1$.  Thus, the
configuration should look like the one given in
Figure~\ref{f:hat}(a). Now, however, the horizontal curves are old
curves (since they carry 4 coordinates), hence $\partial S$ must be
embedded since there is no horizontal new curve in its image.
\begin{figure}[ht]
\begin{center}
\setlength{\unitlength}{1mm}
{\includegraphics[height=6cm]{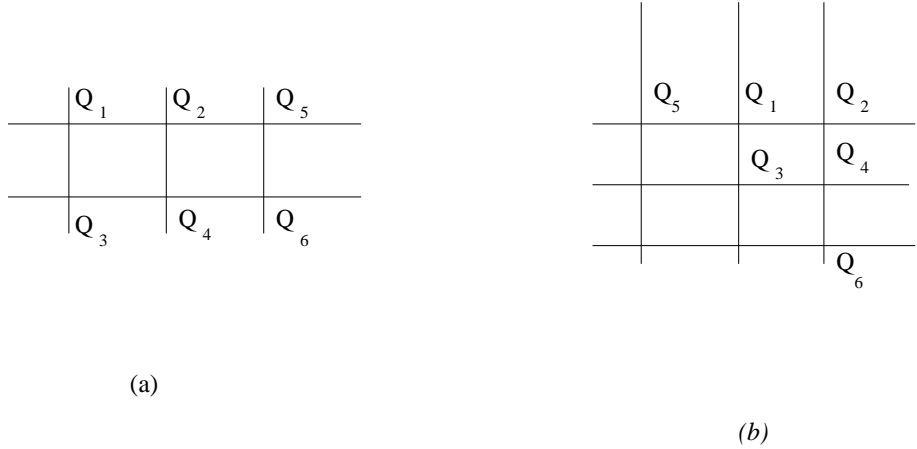}}
\end{center}
\caption{Two cases for $t_{u }=6$.}
\label{f:hat}
\end{figure}
There is a further case with $t_u=6$ we have to consider, namely when
$Q_5$ and $Q_6$ do not sit both on the ${\mathfrak {a}}$-- or
${\mathfrak {b}}$--curves determined by the first four $Q$'s. Such a
situation is shown by Figure~\ref{f:hat}(b). Obviously the vertical
curve passing through $Q_5$ (and the horizontal through $Q_6$) are new
curves, while both curves passing through $Q_2$ are old (containing 4
coordinates in the projection). In order the boundary $\partial S$ not
to be embedded, we need two new curves, which intersect each other in
a corner point and in a further boundary point. The only candidate in
our picture for such curves are the two curves passing through
$Q_3$. Since these must be parts of new curves with the property that
$\pi$ is not injective on them, considering them and the new curves
through $Q_5$ and $Q_6$, Lemma~\ref{lem:TwoNewCurves} applies and
delivers the required contradiction.  This argument concludes the
analysis of the case when $t_{u }=6$, and shows that $\partial S$ is
embedded in this case.

When $t_{u }=7$, there are three significantly different cases to
consider.  By $t_u=7$ there is one point with two corner point
preimages (symbolized by a circle on Figure~\ref{f:het}) and six with
a single corner point preimage (denoted by x's).  By cutting the torus
along an appropriate pair of lines we can assume that the circle is in
the upper left corner of the diagram. The circle must be connected to
at least one of the x's, but the evenness of multiplicities along a
line implies that there should be one more x on that line.  Without
loss of generality we can assume that these two x's are on a
horizontal line passing through the circle. The orthogonal lines
through these two x's must contain further x's, which are either in a
single horizontal line, or on two distinct ones. If these two
additional x's (x$_3$ and x$_4$) are on the same horizontal line, then
the last two x's cannot be put down without violating the evenness
condition. This shows that x$_3$ and x$_4$ are not on the same
horizontal line.  The last two x's must be on these two horizontal
lines, and (again by the evenness condition) they should be in the
same vertical one. This vertical line might be disjoint from the one
passing through the circle (cf. Figure~\ref{f:het}(a)), or is the one
passing through the circled point (as it is given by
Figures~\ref{f:het}(b) and (c)).  Notice that for
Figure~\ref{f:het}(a) there are furter combinatorial possibilities,
but our subsequent argument will be insensitive for te further
combinatorial distinction of these diagrams, hence we do not enumerate
them.

\begin{figure}[ht]
\begin{center}
\setlength{\unitlength}{1mm}
{\includegraphics[height=5cm]{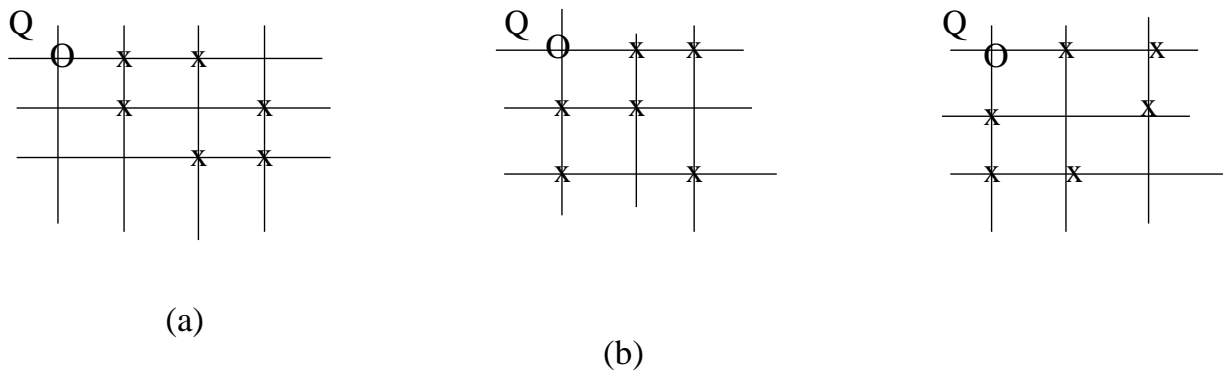}}
\end{center}
\caption{{\bf Cases for $t_{u }=7$.}
Case (a) has six versions, by permuting the three vertical lines
which do not contain $O$, but the various possibilities can be excluded by the
same principle.}
\label{f:het}
\end{figure}
In Figure~\ref{f:het}(a) the vertical line passing through the circle
is obviously a new curve and $\pi $ is not injective on it. By
Lemma~\ref{l:kizar} the boundary $\partial S$ is embedded unless there
is a horizontal and a vertical new curve which passes through one
corner point and one other boundary point which is not a corner.  If
such a vertical new curve exists, it must be different from the one
passing through the circled point, hence we get two parallel new
curves, each having the property that the projection $\pi $ is not
injective on it, which (by Lemma~\ref{lem:TwoNewCurves}) leads to a
contradiction with the assumption $n_w(\phi)=1$.

For the cases of Figures~\ref{f:het}(b) and (c) we argue as follows:
both curves passing through the circled point $Q$ must be old (since
the sum of the coordinates is equal to four). By traversing along the
curves as $\pi (\partial (D(\phi )))$ does, the segments on these
curves (two on each) are oriented in the same direction, and cannot
perform complete turns because the local multiplicities are equal to
$\frac{1}{4}$. The orientation of the image of $\partial (D(\phi ))$
on both horizontal lines not passing through $Q$ are opposite to the
orientation we get on the line passing through $Q$. To verify these
statements we have to cosnider all possible cases for the image of
$\partial D(\phi )$. The combinatorial difference between
Figures~\ref{f:het}(b) and (c) {\em a priori} could mean that we get
differente results in this oriantation issue, but the careful analysis
of all cases shows that for both cases the statement above holds
true. This implies that, in order to get a null--homologous curve,
none of these curves can go around the torus (since they cannot cancel
each other by the orientation reasoning). So even if one of the curves
not passing through $Q$ is a new curve, the projection $\pi$ remains
injective on it, so by Lemma~\ref{l:kizar} the boundary $\partial S$
is embedded.  This argument finishes the analysis when $t_u=7$.

Assume finally that $t_{u }=8$. Consider a curve passing through a
chosen point $\pi (u (P_i))$. It contains an odd number of further
such points, and obviously cannot contain more than three additional
ones.  Therefore we have to distinguish two cases:

\noindent (1) there is a line containing 4 points, or

\noindent (2) each line contains exactly 2.

In the first case, the perpendicular lines must contain an odd number
of further points, hence that odd number must be one.  The possibilities are
pictured in Figure~\ref{f:nyolcpont}, since these additional four
points then might lie on one or two lines, giving (i) and (iii).  In
case (i) both horizontal lines are old curves (having four
coordinates), hence there is no new horizontal curve in the diagram,
which according to Lemma~\ref{l:kizar} implies that $\partial S$ is
embedded.

\begin{figure}[ht]
\begin{center}
\setlength{\unitlength}{1mm}
{\includegraphics[height=7cm]{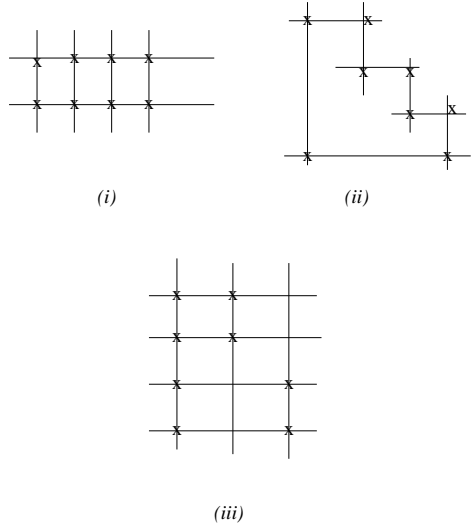}}
\end{center}
\caption{Three types of  cases for $t_{u }=8$.}
\label{f:nyolcpont}
\end{figure}

In order to analyze possibilities regarding cases (ii) and (iii), we
first prove a lemma.  To state this result, suppose that the points
$\pi (u (P_i))$ are given on the extended grid. In the spirit of
Lemma~\ref{lem:Principle}, we specify how two neighbouring points can
be connected with a short interval (for which we have four
possibilities: there are two short intervals on the torus connecting
two points, and each can be oriented in two ways).  The initial choice
creates a unique path, since when connecing all points we have to
follow the convention that at every point we turn $90^{\circ}$ to the
left (dictated by the fact that the local multiplicity is
$\frac{1}{4}$ at every point) and then use the short interval
connecting the point to the next one.  In this manner we get a closed
curve $\gamma$ in the extended grid, which might not be the projection
of the image of $\partial S$ in the Heegaard diagram, but it visits
the corner points in the same order as the projection of $u (\partial
S)$. As always, $A$ and $B$ will denote the homology generators of
$H_1(T^2; \Z )$. With these conventions in place now we are ready to
state
\begin{lem}\label{l:megkizar}
Fix the images of the corner points of the octagon $S$.  If for all
four choices the homology class represented by the corresponding
$\gamma$ is different from $\pm A \pm B$ then any map $u \colon S \to
\Sigma$ of an octagon representing $\phi \in \pi _2 (\x , \y )$ with
$\Mas(\phi)=1$ and with the given configuration of points $\pi (\phi
(P_i))$ is an embedding on $\partial S$.
\end{lem}
\begin{proof} 
  Suppose that $\partial S$ is not embedded, which by
  Lemma~\ref{l:kizar} implies, in particular, that there are new
  $\alpha$-- and $\beta$--curves such that their projections are not
  embedded. In order to recapture the image of $\partial S$ from
  $\gamma$, therefore we must modify its homology class.  Since we
  cannot alter the order and the turns, there is only one operation we
  can perform on $\gamma$: we can add to the short interval connecting
  two consecutive corners a complete turn, cf. the similar argument in
  Lemma~\ref{lem:Principle}.  Notice that we cannot do it along an old
  curve (since this would increase the local multiplicity of the
  coordinates along that curve), and we cannot add two such twists
  along parallel new curves (cf. Lemma~\ref{lem:TwoNewCurves}) or two
  turnes along the same curve.  Thus, if $\partial S$ is not
  embedded by $u$, we must change $\gamma$ by $\pm A \pm B$ to get
  $\partial S$.  Since the image of $\partial S$ is nullhomologous,
  the claim follows.
\end{proof}

To apply this result, we need to list all possible projections of the
8 corner points, and compute the corresponding $\gamma$'s. 
This is a fairly straightforward exercise. For 
case (iii) of Figure~\ref{f:nyolcpont} there are two
significantly different cases, all others can be derived by either
switching the roles of the $\alpha$-- and $\beta$--curves or cutting
the torus open to a rectangle along different lines. These two cases are
shown by Figure~\ref{f:twoc}.
\begin{figure}[ht]
\begin{center}
\setlength{\unitlength}{1mm}
{\includegraphics[height=4cm]{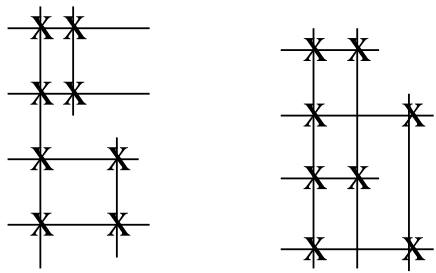}}
\end{center}
\caption{Two cases to consider for (iii).}
\label{f:twoc}
\end{figure}
Each diagram gives rise to eight $\gamma$--curves. Specifically, the
vertical segment of the $\gamma$ curve through the upper-left x in
each picture can be connected to one of two x's in the same vertical
circle, with two possible orientations and in two possible ways.  A
simple case-by-case check shows that none of them is of the form $\pm
A \pm B$; in fact, in all cases the homology of $\gamma$ contains at
most one of the factors $\pm A$ or $\pm B$.

The discussion of case (ii) of Figure~\ref{f:nyolcpont} needs more
cases to consider. First we list all possible configurations of the
eight corner points in a $4\times 4$ grid. It is easy to see that
(when viewed on the torus) there is always a pair of points which are
neighbours: if this does not hold, the boundary would fall into two 
components. We can assume that these two points are on the top left
corner. By considering rotational and reflection symmetries
(which obviously would provide similar results) there are four cases to 
consider, and these configurations are shown by Figure~\ref{f:fourposs}.
\begin{figure}[ht]
\begin{center}
\setlength{\unitlength}{1mm}
{\includegraphics[height=6cm]{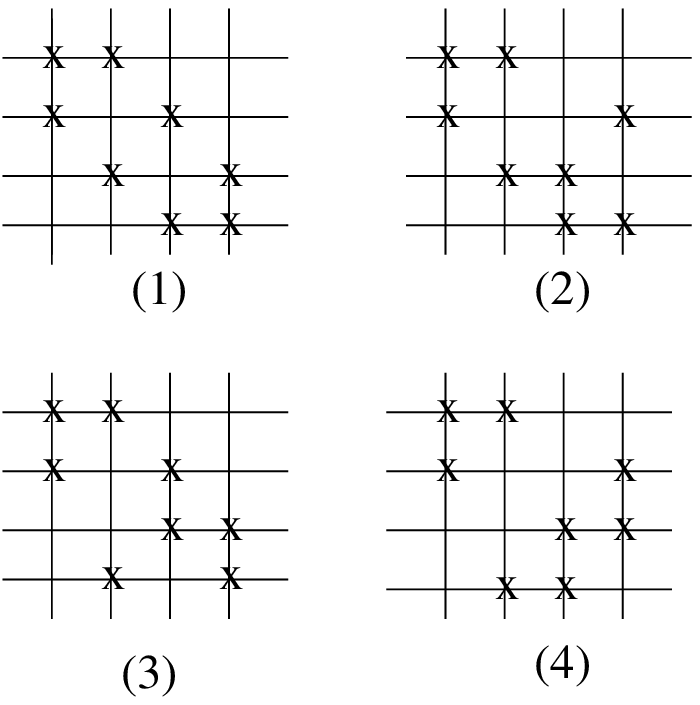}}
\end{center}
\caption{The four possibilities.}
\label{f:fourposs}
\end{figure}
Remember that each configuration gives rise to four $\gamma$--curves,
and it is not hard to check that for (1) the results are all
null--homologous, and for (2) and (3) they represent $\pm A$ (or $\pm
B$, depending on the chosen conventions). By Lemma~\ref{l:megkizar}
therefore in these cases the boundary $\partial S$ is embedded. The
configuration Figure~\ref{f:fourposs}(4), however, provides a curve
$\gamma$ with homology equal to $A+B$ (after appropriate orientations
are fixed), hence the lemma itself is not enough to verify that
$\partial S$ is embedded. The four possible $\gamma$'s are (up to
symmetries) equivalent to the diagram shown in Figure~\ref{f:gamma}.
\begin{figure}[ht]
\begin{center}
\setlength{\unitlength}{1mm}
{\includegraphics[height=5cm]{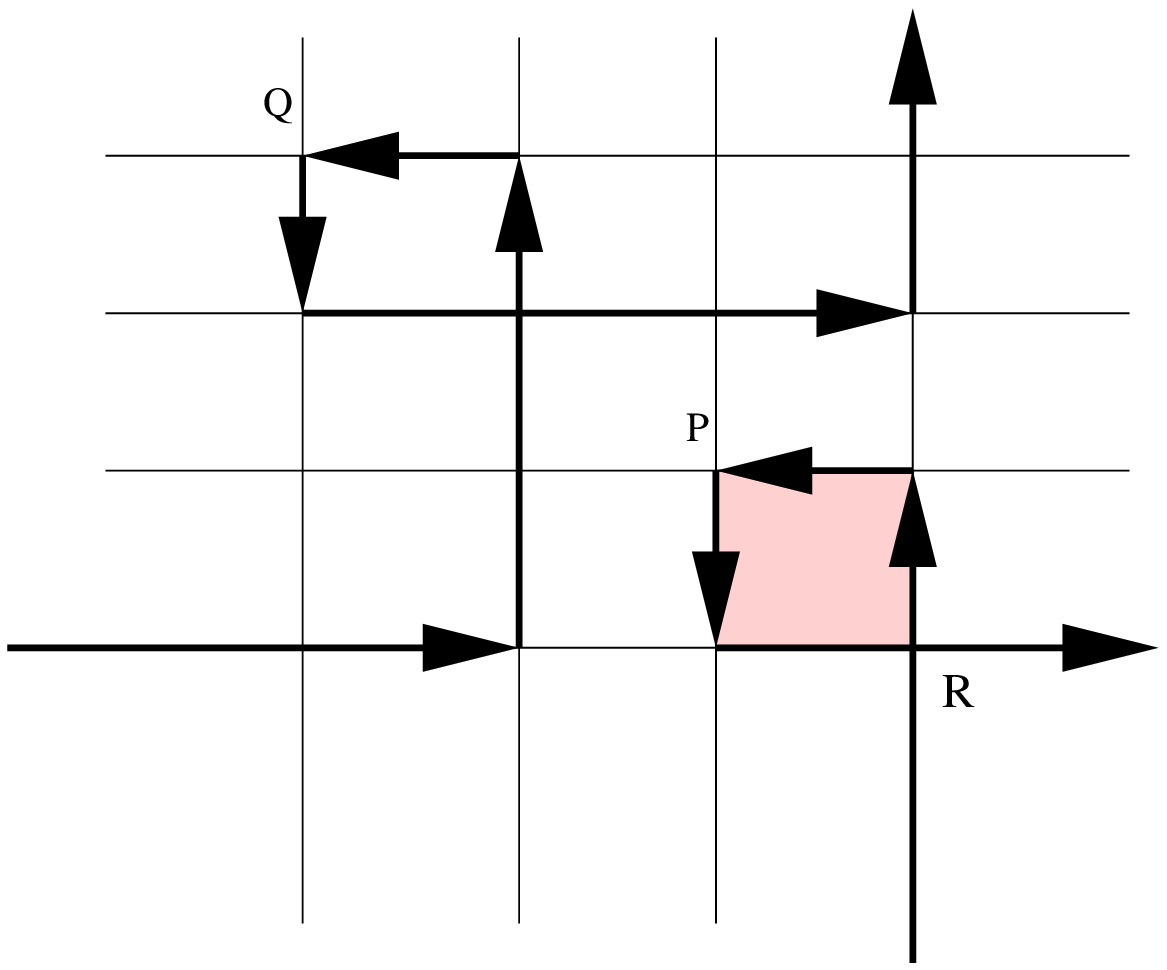}}
\end{center}
\caption{The curve $\gamma$ when Lemma~\ref{l:megkizar} does not apply.}
\label{f:gamma}
\end{figure}
From $\gamma$ we can actually recover the projection of $\partial S$:
since it bounds $S$ it must be homologically trivial, turns as
$\gamma$ does, and cannot pass an old curve more than once (since the
local multiplicities of the corners are equal to $\frac{1}{4}$). To
turn $\gamma$ to a trivial element in the homology of the torus, we
have to add a full turn both to the horizontal and to the vertical
direction either at $P$ or at $Q$; these possibilities are dictated by
the orientation (as shown by Figure~\ref{f:gamma}). If we add the
horizontal and the vertical full turns in different points (one at $P$
and the other one at $Q$), then by Lemma~\ref{l:kizar} the resulting
image of $\partial S$ comes from an embedded curve. If the two full
twists are added at the same point, we need to argue further.  The two
choices as where to add the two full turns will provide similar
results, so it is enough to examine the case when we do the
modification at $Q$. Since these curves pass more than a complete turn
down in the grid, these must be new curves (turning at least half),
hence by Lemma~\ref{lem:ThereIsABasepoint} there are base points near
them.  Since we have a single base point in the domain, it must be in
the square with vertex $Q$. Consider now the inverse images of the
arcs passing through the point $R$.  These arcs are disjoint in the
Heegaard diagram for $Y$ only if there is an octagon which projects
onto the shaded region, contradicting the fact that the base point is
in the elementary domain which projects to the rectangle adjacent to
$Q$.  On the other hand, if the arcs intersect, then a simple
calculation of multiplicities shows that the domain to the lower right
from $R$ has multiplicity $-1$ (since the domain mapping to the shaded
one in the grid has multiplicity 1), providing a contradiction to the
fact that $d(\phi )$ is non-negative.

In conclusion, by examining all possible values of $t_u$ we verified
that the boundary of the surface $S$ is embedded. Since it is a
homologically trivial simple closed curve, its complement has two
components, therefore in writing $D(\phi )$ as the formal linear
combination of elementary domains we have only at most two distinct
coefficients. Since the corner points have local multiplicities
$\frac{1}{4}$, the domains around each corner point have
multiplicities $0$ or $1$, hence the domain $D(\phi )$ is embedded (in
the sense of Definition~\ref{def:Domain}).
\end{proof}

\subsection*{The proof of the main theorem}

After this case-by-case analysis we are ready to prove the main theorem
of the paper.

\begin{proof}[Proof of Theorem~\ref{thm:ClassifyDomains}]
  The classification of non-negative, Maslov index one domains with
  $n_w(\phi)\leq 1$ as stated in Theorem~\ref{thm:ClassifyDomains} is
  a consequence of Proposition~\ref{prop:OnlyIndexOnes}, together with
  its strengthenings, Propositions~\ref{prop:Rectangle},
  \ref{prop:Annulus}, and \ref{prop:Octagon}.
\end{proof}

\begin{proof}[Proof of Theorem~\ref{t:contribution}]
  According to Theorem~\ref{thm:ClassifyDomains}, if $\phi$ is a
  non-negative, Maslov index one domain with $n_w(\phi)\leq 1$, then
  it is one of three special types of domains; moreover, we know that
  $c(\phi)$ does not depend on a choice of almost-complex structure.
  We can now show for all three types of domains that $c(\phi)=1$
  holds, for example, as follows.  We embed $\phi$ into a Heegaard
  diagram for $S^3$ for which there are three generators, in which
  $\phi$ is the non-negative domain which does not contain the
  basepoint. Since $\HFa (S^3)=\Z / 2\Z$, this Heegaard diagram shows
  that $c(\phi )=1$ mod 2. (For the case of the octagon, this is
  explictly done in \cite[Theorem~6.1]{KT}; and a more general
  construction is given in \cite[Lemma~3.11]{Cube} which handles all
  three cases.) Note that for the case of the annulus, the
  combinatorics of the diagram is important, in order for the answer
  to be independent of the choice of almost-complex structure.  Since
  $\phi $ is embedded in the adapted Heegaard diagram for $Y$, its
  contribution $c(\phi )$ in the differential is the same as in the
  Heegaard diagram for $S^3$, concluding the proof.
\end{proof}

\begin{proof}[Proof of Theorem~\ref{t:main}]
  According to Corollary~\ref{c:van}, every three-manifold admits an
  adapted Heegaard diagram. According to Lemma~\ref{lem:CalculateCF2},
  $\HFtwo(Y)$ can be determined once one calculates $c(\phi)$ for
  every Maslov index one domain with $n_{w}(\phi)\leq 1$. According to
  Theorem~\ref{t:contribution}, however, for such $\phi$, we have that
  $c(\phi)=1$ if and only if $\phi$ is non-negative.
\end{proof}

\bibliographystyle{plain}
\bibliography{biblio}

%

\end{document}